\documentclass[11pt]{article}
\usepackage{amsmath, amsfonts, amssymb, amsthm,epsfig}
\usepackage{natbib}
\usepackage{setspace}
\usepackage[ps2pdf,linktocpage,bookmarks,bookmarksnumbered,bookmarksopen]{hyperref}
\hypersetup{%
   pdftitle   ={The coding complexity of L\'{e}vy processes},
   pdfsubject ={Manuscript},
   pdfauthor  ={Frank Aurzada and Steffen Dereich},
   pdfkeywords={High-resolution quantization; distortion-rate function; complexity; L\'{e}vy processes}
      }

\setlength{\textwidth}{16.0cm} \setlength{\textheight}{22.0cm}
\setlength{\hoffset}{-2.0cm} \setlength{\voffset}{-1.1cm}
\addtolength{\headheight}{3.5pt}



\let\BFseries\bfseries\def\bfseries{\BFseries\mathversion{bold}} 
\DeclareMathSymbol{\leqslant}{\mathalpha}{AMSa}{"36} 
\DeclareMathSymbol{\geqslant}{\mathalpha}{AMSa}{"3E} 
\DeclareMathSymbol{\eset}{\mathalpha}{AMSb}{"3F}     
\renewcommand{\leq}{\;\leqslant\;}                   
\renewcommand{\geq}{\;\geqslant\;}                   
\newcommand{\inftwo}[2]{\inf_{\substack{#1 \\ #2}}} 

\newcommand{\range}{\mathrm{\,range\,}}
\newcommand{\ind}{1\hspace{-0.098cm}\mathrm{l}}

\newcommand{\cC}{\mathcal{C}}

\newcommand{\cE}{\mathcal{E}}
\newcommand{\cF}{\mathcal{F}}

\newcommand{\cT}{\mathcal{T}}
\newcommand{\cO}{\mathcal{O}}

\newcommand{\cU}{\mathcal{U}}

\newcommand{\om}{\omega}

\newcommand{\ep}{\epsilon}
\newcommand{\eps}{\varepsilon}
\newcommand{\lam}{\lambda}
\newcommand{\sig}{\sigma}
\newcommand{\vphi}{\varphi}

\newcommand{\lfl}{\lfloor}
\newcommand{\rfl}{\rfloor}

\newcommand{\dto}{\downarrow}

\newcommand{\sgn}{\mathrm{sgn}}

\newcommand{\ID}{\mathbb{D}}
\newcommand{\IP}{\mathbb{P}}

\newcommand{\IN}{\mathbb{N}}

\newcommand{\IZ}{\mathbb{Z}}

\newcommand{\IR}{\mathbb{R}}
\newcommand{\IE}{\mathbb{E}}
\newcommand{\iN}{\in\IN}

\newcommand{\iR}{\in\IR}

\newcommand{\be}{\begin{eqnarray*}}
\newcommand{\ee}{\end{eqnarray*}}
\newcommand{\ben}{\begin{eqnarray}}
\newcommand{\een}{\end{eqnarray}}

\theoremstyle{plain}
\newtheorem{theo}{Theorem}[section]
\newtheorem{lemma}[theo]{Lemma}
\newtheorem{propo}[theo]{Proposition}

\theoremstyle{definition}

\newtheorem{remark}[theo]{Remark}
\newtheorem{example}[theo]{Example}

\renewenvironment{proof}[1][] {{\bf Proof#1.} }{\hspace*{\fill}$\square$\medskip\par}

\def\P{{\bf {\mathbb{P}}}}
\newcommand{\pr}[1]{\P\left(#1\right)}
\def\ep{{\varepsilon}}
\def\E{\mathbb{E}}
\newcommand{\norm}[1]{\left\|#1\right\|}
\newcommand{\indi}[1]{\,\ind_{\{#1\}}}
\def\TT{\mathcal{T}}\def\R{\IR}
\def\N{\mathbb{N}}
\def\gri{g}                                 

\begin{document}
\vglue20pt \centerline{\huge\bf The coding complexity of}
\medskip

\centerline{\huge\bf  L\'evy processes}

\bigskip
\bigskip

\centerline{by}
\bigskip
\medskip

\centerline{{\Large Frank Aurzada and Steffen Dereich}}
\bigskip

\begin{center}\it
Institut f\"ur Mathematik, MA 7-5, Fakult\"at II \\
Technische Universit\"at Berlin
\\ Stra\ss e des 17.\ Juni 136 \\ 10623 Berlin\\
\{aurzada,dereich\}@math.tu-berlin.de
\end{center}

\bigskip
\begin{center}\today
\end{center}
\bigskip
\bigskip
\bigskip

{\leftskip=1truecm \rightskip=1truecm \baselineskip=15pt \small

 \noindent{\slshape\bfseries Summary.} We investigate the high resolution  coding problem for general real-valued L\'evy processes under $L^p[0,1]$-norm distortion. Tight asymptotic formulas are found under mild regularity assumptions.
\bigskip

\noindent{\slshape\bfseries Keywords.} High-resolution quantization; distortion-rate function; complexity; L\'{e}vy pro\-ces\-ses.

\bigskip

\noindent{\slshape\bfseries 2000 Mathematics Subject
Classification.} 60G35, 41A25, 94A15.

}

\section{Introduction and Results}\label{intro}
\subsection{Motivation and Notation}
In this article, we study the coding problem for real-valued L\'evy processes $X$
(\emph{original}) under $L^p[0,1]$-norm distortion for some fixed $p\in[1,\infty)$. Here we think of $X$ being a $\ID[0,\infty)$-valued process, where $\ID[0,\infty)$ denotes the space of c\`adl\`ag functions endowed with the Skorohod topology.
We shall denote by $\|\cdot\|$ the standard $L^p[0,1]$-norm.

Let $0<s\le \infty$. The objective is now to find a c\`adl\`ag real-valued process $\hat X$  (\emph{reconstruction} or \emph{approximation})
that minimizes the error criterion
\begin{align}\label{eq515-1}
\bigl\| \| X-\hat X \| \bigr \| _{L^s(\IP)}= \begin{cases} \IE [\|X-\hat X\|^s]^{1/s} &\text{ if }s<\infty\\
\mathrm {ess\,sup}\, \|X-\hat X\| & \text{ if }s=\infty
                                  \end{cases}
\end{align}
under a given complexity constraint on the approximating random variable $\hat X$. We will work with the following three  complexity constraints that have been originally suggested by Kolmogorov \cite{Kol68}: for $r\geq 0$,
\begin{itemize}
\item $\log |\range (\hat X)|\le r$ (\emph{quantization constraint})
\item $H(\hat X)\le r$, where $H$ denotes the entropy of $\hat X$ (\emph{entropy constraint})
\item $I(X;\hat X)\le r$, where $I$ denotes the Shannon mutual information of $X$ and $\hat X$ (\emph{mutual information constraint}).
\end{itemize}
We will work with the following standard notation for entropy and mutual information:
$$
H(\hat X)=\begin{cases} -\sum_{x} p_x \log p_x&\text{ if }\hat X\text{ is discrete with probability weights }(p_x)\\
\infty &\text{ otherwise}
          \end{cases}
$$
and
$$
I(X,\hat X)= \begin{cases}
\int \frac{d\IP_{X,\hat X}}{d\IP_X\otimes \IP_{\hat X}} \,d\IP_{X,\hat X} & \text{ if } \IP_{X,\hat X}\ll \IP_X\otimes \IP_{\hat X}\\
\infty & \text{ otherwise.}
             \end{cases}
$$
Here, $\IP_Z$ denotes the distribution function of a random variable $Z$.

When considering the quantization constraint, we get the following minimal value
$$
D^{(q)}(r,s):=\inf\bigl\{ \bigl\|\|X-\hat X\|\bigr\|_{L^s(\IP)} : \log |\range (\hat X)|\le r\bigr\},
$$
which we call the (minimal) \emph{quantization error} for the \emph{rate} $r\ge 0$ and the \emph{moment} $s$. Analogously, we denote by $D^{(e)}(r,s)$ and $D(r,s)$ the minimal values under the entropy- and mutual information constraint, respectively.  $D^{(e)}$ and $D$ will be called entropy coding error and distortion rate function, respectively. We have $D\leq D^{(e)} \leq D^{(q)}$, for any random variable.

The quantization constraint naturally appears, when  coding the signal $X$ under a strict bit-length constraint. The entropy constraint corresponds to an average bit-length constraint and  the mutual information constraint gains its importance from Shannon's celebrated source coding theorem. In this article we will not consider the run time behaviour of our coding schemes. However, we think that the approximation schemes (provided later in the article) have implementations with reasonable runtime behaviour.
Strictly speaking, the quantities $D^{(e)}$ and $D$ depend on the probability space. However, this dependence has no effect on our results.

The objective of the article is
\begin{itemize}
\item to provide efficient coding strategies for general L\'evy processes that are parameterized by three parameters and that are robust under a mismatch on the L\'evy measure  and
\item to complement the estimates by appropriate lower bounds that show weak optimality of our scheme for most cases.
\end{itemize}

In the article, $X=(X_t)_{t\in[0,\infty)}$ denotes a L\'evy process in the Skorohod space $\ID[0,\infty)$, that is a process starting in $0$ with independent and stationary increments. Due to the L\'evy Khintchine formula, the characteristic function of each marginal $X_t$ ($t\in[0,1]$) admits a representation
\begin{align}\label{eq0117-1}
\IE e^{iuX_t}=e^{-t\psi(u)},
\end{align}
where
$$
\psi(u)=\frac{\sig^2}2 u^2 + i b u+\int_{\IR\backslash\{0\}} (1-e^{iux}+\ind_{\{|x|\le 1\}} iux)\,\nu(dx)
$$
for parameters
$\sig^2\in[0,\infty)$, $b\iR$, and a positive measure $\nu$ on $\IR\backslash\{0\}$ with
\begin{equation} \label{eqn:lm}
\int_{\IR\backslash\{0\}} 1\wedge x^2 \,\nu(dx)<\infty.
\end{equation}
On the other hand, for a given triplet $(\nu,\sig^2,b)$ there exists a L\'evy process $X$ such that (\ref{eq0117-1}) is valid, moreover the distribution of a L\'evy process $X$ is uniquely characterized by the latter triplet.
We will call the corresponding process an $(\nu,\sig^2,b)$-L\'evy process.

If (\ref{eq0117-1}) is true for
$$
\psi(u)=\frac{\sig^2}2 u^2+\int_{\IR\backslash\{0\}} (1-e^{iux}+iux) \,\nu(dx),
$$
then we will call $X$ a $(\nu,\sig^2)$-L\'evy martingale. Note that such a representation implies that $\int |x|\wedge x^2 \,\nu(dx)$ is finite and that the L\'evy process $X$ is a martingale in the usual sense.

After stating our main results in Section~\ref{sec:results}, we shall list some important examples in Section~\ref{sec:exa}. Then Section~\ref{sec:upper} is devoted to the analysis of a particular coding scheme.
The coding strategy of interest will be a measurable function
$$
\Theta=\Theta_{\eps,b,m}:\ID[0,1)\to\ID[0,1)
$$
depending on three parameters $\eps>0$, $b\iR$ and $m>0$.
The parameter $\eps$ will be responsible for the quality of the reconstruction, in the sense that lower $\eps$ correspond to lower approximation errors. The parameters $b$ and $m$ have to be adjusted to $\eps$ and certain quantities relying on the L\'evy measure. Namely, the coding scheme presented below works in a weakly optimal way (in the sense of both quantization constraint and entropy constraint coding error) if $m=m(\eps)$ is the mean number of jumps to be encoded and $b=b(\eps)$ is a drift compensation term. If the generating triplet of the L\'evy process is given, these parameters are explicitly available for computation. If the generating triplet is not known, these values can be estimated from the data.

In Section~\ref{sec:lowerbound}, we derive lower bounds for the above coding problems. Together, these results show that the provided coding scheme is weakly optimal in many cases.

Throughout, we use the following notation for strong and weak asymptotics.
For two functions $f$ and $g$, $f(x) \sim g(x)$, as $x\to 0$, means that
$f(x)/g(x) \to 1$, as $x\to 0$. On the other hand, we use the notation
$f(x) \lesssim g(x)$, as $x\to 0$, if  $\lim_{x\to 0} f(x)/g(x)\leq 1$.
We also write $g(x)\gtrsim f(x)$ in this case. Furthermore, we write
$f(x) \approx g(x)$, as $x\to 0$, if $0<\liminf_{x\to 0} f(x)/g(x)\leq \limsup_{x\to 0} f(x)/g(x) < \infty$.

\subsection{Results} \label{sec:results}
The crucial quantities describing the coding complexity of L\'evy processes are
$$F_1(\ep):= \ep^{-2} \left( \sigma^2+ \int x^2\wedge \eps^2 \, \nu ( d x )\right) \qquad\text{and}\qquad F_2(\ep) := \int_{[-\ep,\ep]^c} \log \left( |x| / \ep \right) \, \nu ( d x).$$
Furthermore, we shall use $F(\ep):= F_1(\eps)+F_2(\eps)$. The function integrated by the L\'{e}vy measure is visualised in Figure~\ref{fig:testfunct03}.
Note that (\ref{eqn:lm}) does not ensure the finiteness of $F_2$ and that $F_2$ is either finite or infinite for all $\eps>0$.



\begin{figure}
    \centering
        \epsfig{file=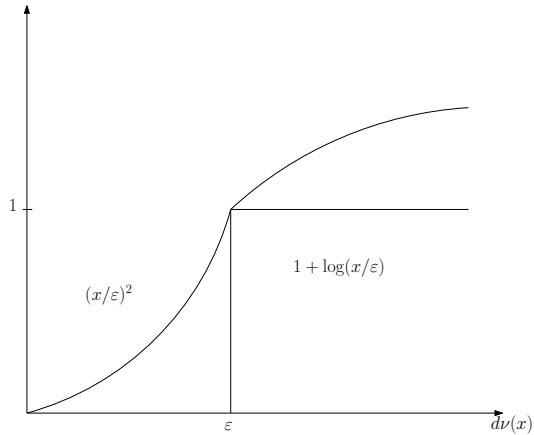,scale=0.4}
    \caption{Visualization of the function $F$}
    \label{fig:testfunct03}
\end{figure}

We are now in a position to state the main results of the article. Let us start with the entropy coding error.

\begin{theo} \label{thm:upperentropy} There exist constants $c_1=c_1(p)>0$ and $c_2>0$ such that, for arbitrary L\'evy processes with finite $F_2$, any $s>0$, and all $\ep>0$,
$$D^{(e)}(\, c_1 F(\ep)  ,s) \leq c_2 \,\ep.$$
\end{theo}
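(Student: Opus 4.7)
\noindent\emph{Proof plan.} The strategy is to build an explicit reconstruction $\hat X=\Theta_{\eps,b,m}(X)$, with $m=m(\eps):=\lam$ chosen as the expected number of large jumps and $b=b(\eps)$ an appropriate drift compensation, and then verify separately the complexity and distortion bounds. I first decompose $X=D+M+L$ via the L\'evy-It\^o theorem, where $D$ is a deterministic drift, $M$ is the $(\nu|_{[-\eps,\eps]},\sig^2)$-L\'evy martingale (Brownian part plus compensated jumps of size $\leq\eps$), and $L$ is a compound Poisson process of intensity $\lam:=\nu([-\eps,\eps]^c)\leq F_1(\eps)$ carrying the jumps of size $>\eps$. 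Encoding $D$ is free, and $M$ and $L$ are encoded independently so that their entropies add.

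The martingale $M$ has variance $V:=\sig^2+\int_{|x|\leq\eps}x^2\,\nu(dx)\leq\eps^2 F_1(\eps)$ at time $1$. I partition $[0,1]$ into $N\asymp F_1(\eps)$ equal cells and let $\hat M$ be the piecewise-constant process whose successive increments are the quantized versions of $M_{k/N}-M_{(k-1)/N}$ on a regular grid of mesh $\eps$. Each increment has variance $V/N\lesssim\eps^2$ and sub-exponential tails uniform in the triplet (as a compensated Poisson functional), so $O(1)$ bits of entropy per cell suffice, giving total entropy $O(F_1(\eps))$. A cellwise Burkholder-Davis-Gundy estimate combined with Doob's inequality yields $\bigl\|\,\|M-\hat M\|\,\bigr\|_{L^s(\IP)}\lesssim\eps$ for every $s>0$.

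The large-jump process $L$ is encoded by transmitting, for each jump, its cell index, its size, and a sub-cell position. The bitmap indicating which cells contain jumps has expected entropy $\lam\log(N/\lam)+O(\lam)\lesssim F_1(\eps)$, using $p\log(1/p)\leq 1/e$. A jump of size $|x|>\eps$ has its size transmitted via a prefix code for its dyadic scale $\lceil\log_2(|x|/\eps)\rceil$ followed by an $O(1)$-bit refinement to mesh $\eps$, and its sub-cell position is refined to a precision that shrinks with $|x|$; both refinements cost $O(\log(|x|/\eps))$ bits per jump, giving expected total $O(F_2(\eps))$. Dithered (mean-zero, independent) size quantization turns the residual size-error into a pure-jump martingale of quadratic variation $\leq\eps^2 N_\eps$, which, bundled with $M$, is absorbed into the same Burkholder-Davis-Gundy estimate since $\eps^2 N_\eps\leq\eps^2 F_1(\eps)$ in expectation.

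The main obstacle is controlling the $L^s(\IP,L^p[0,1])$-contribution of the residual time-snapping of the large jumps when $\lam$ is large: a naive cell-width bound produces time-residuals of order $\eps(\IE N_\eps^{s/p})^{1/s}$, which blows up with $\lam$. The fix is to adapt the sub-cell precision to $|x|$ so that the per-jump $L^p[0,1]$-contribution shrinks fast enough for a Poisson moment-generating-function bound to yield a $O(\eps)$ $L^s(\IP)$-estimate, while keeping the extra bits within the $F_2(\eps)$-budget. Assembling the three parts gives $H(\hat X)\leq c_1 F(\eps)$ and $\bigl\|\,\|X-\hat X\|\,\bigr\|_{L^s(\IP)}\leq c_2\,\eps$.
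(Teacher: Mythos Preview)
Your approach is genuinely different from the paper's, and it is worth spelling out the contrast. The paper does \emph{not} separate $M$ and $L$ via L\'evy--It\^o and a fixed time grid. Instead it runs a single adaptive scheme: after subtracting the drift $b(\eps)t$, it records the successive stopping times $S_i$ at which the drift-adjusted process first exits a band of width $4\eps$ around the last recorded grid value $g(X'_{S_{i-1}})$, together with the discretised height $H_i=g(X'_{S_i})-g(X'_{S_{i-1}})$ and an approximate time $\hat S_i$ chosen so that $|H_i|^p(\hat S_i-S_i)\leq \eps^p/M$. The key payoff is that the approximation error is \emph{deterministically} at most $3\eps$ (equations (\ref{eqn:lmodstar1})--(\ref{eqn:lmodstar2}) in the paper), so the statement for all moments $s>0$ is immediate and no BDG or Poisson moment bounds are needed. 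The entropy bound then reduces to two expectations: the compensation formula gives $\IE\sum_t\log_+(|\Delta X_t|/\eps)=F_2(\eps)$ for the heights, and a short renewal lemma (Lemma~\ref{lem:recursionunifo}) gives $\IE\sum_{i\le M}(1+\log U_i^{-1})\lesssim F_1(\eps)$ for the times, where the $U_i$ arise from a coupling of $S_i-S_{i-1}$ with uniforms via Doob's inequality on the small-jump martingale. What your route buys is a more modular, ``textbook'' decomposition; what the paper's route buys is the almost-sure error bound that trivialises the moment issue and avoids the delicate $L^s$ bookkeeping you flag as the main obstacle.

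One point in your sketch does need repair. If you literally quantize the increments $M_{k/N}-M_{(k-1)/N}$ independently to mesh $\eps$, the cumulative rounding error $\sum_{j\le k}(I_j-Q_j)$ is a walk of step size up to $\eps/2$ over $N\asymp F_1(\eps)$ steps, so $\hat M$ drifts away from $M$ by order $\eps\sqrt{F_1(\eps)}$ (or worse), not $\eps$; the cellwise BDG bound only controls $M_t-M_{k/N}$, not this accumulated bias. The fix is to quantize cumulatively, i.e.\ set $\hat M$ constant equal to $g(M_{k/N})$ on cell $k$ and transmit $g(M_{k/N})-g(M_{(k-1)/N})$; this keeps $|\hat M-M|\le\eps/2$ at grid points almost surely and still costs $O(1)$ bits per cell since the transmitted symbol differs from $I_k/\eps$ by at most $1$. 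The same issue reappears in your treatment of the dithered size-residuals of $L$: ``bundling with $M$'' only works if that bundle is itself cumulatively re-quantized at the grid points, which in turn means the encodings of $M$ and $L$ are not independent as you first assert. With these corrections your plan goes through, but the paper's stopping-time device sidesteps all of this.
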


Similarly to the entropy coding error, we obtain the upper bound for the quantization error.
\begin{theo} \label{thm:upperquant} Assume that there is a $q>s$ such that
\begin{itemize}
\item[(a)] $\E\norm{X}^q < \infty$,
\item[(b)] for some $\mu>0$, \begin{equation}\limsup_{\eps\to 0} \frac{\int_{|x|>\eps} ( |x|/\eps)^\mu \, \nu(d x)}{\nu([-\eps,\eps]^c)} <\infty. \label{eqn:addcond}\end{equation}
\end{itemize}
Then there exist a constant $c_1=c_1(p,\nu)>0$ and a universal constant $c_2>0$ such that, for all $0<\eps<\eps_0=\eps_0(\nu,s,p)$,
$$D^{(q)}(\,c_1 F(\ep) ,s) \leq c_2 \,\ep.$$
\end{theo}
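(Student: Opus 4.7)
The strategy is to upgrade the entropy-coded reconstruction supplied by Theorem \ref{thm:upperentropy} to one with a fixed, finite alphabet. Assumption (a) provides the extra integrability needed to absorb the distortion contributed by the codewords we discard, while (b) controls the tail of the L\'evy measure so that the large-jump magnitudes admit a finite encoding of the required size.

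First, I would apply Theorem \ref{thm:upperentropy} simultaneously at the moments $s$ and $q>s$ to produce a single reconstruction $\hat X = \Theta_{\eps,b,m}(X)$ satisfying, after enlarging $c_1$,
\begin{equation*}
H(\hat X) \leq c_1 F(\eps),\qquad \bigl\|\|X-\hat X\|\bigr\|_{L^s(\IP)} \leq c_2 \eps,\qquad \bigl\|\|X-\hat X\|\bigr\|_{L^q(\IP)} \leq c_2\eps.
\end{equation*}
Together with (a) and Minkowski's inequality, this yields a uniform bound $\bigl\|\|\hat X\|\bigr\|_{L^q(\IP)} \leq C(\nu,q)$.

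Next comes the truncation. The range of $\hat X$ is infinite only through the number $N_\eps := \#\{t\in[0,1] : |\Delta X_t|>\eps\}$ of large jumps (which is $\mathrm{Poisson}(m_\eps)$ with $m_\eps := \nu([-\eps,\eps]^c)$) and through the magnitudes of those jumps; set $J_\eps := \sup_{t\leq 1}|\Delta X_t|$. I would introduce the typical event
\begin{equation*}
A_\eps := \{N_\eps\leq N(\eps)\}\cap\{J_\eps\leq M(\eps)\}
\end{equation*}
for parameters $N(\eps),M(\eps)$ to be calibrated, and define $\tilde X := \hat X\,\ind_{A_\eps}$, defaulting to the zero path on $A_\eps^c$. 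H\"older's inequality gives
\begin{equation*}
\bigl\|\|X-\tilde X\|\bigr\|_{L^s(\IP)}\leq c_2\eps + \bigl\|\|\hat X\|\,\ind_{A_\eps^c}\bigr\|_{L^s(\IP)} \leq c_2\eps + \IP(A_\eps^c)^{1/s-1/q}\cdot C(\nu,q).
\end{equation*}
The Poisson tail gives $\IP(N_\eps>N(\eps))\leq\eps^\beta$ for any prescribed $\beta$, once $N(\eps)\sim m_\eps+K\log(1/\eps)$ with $K$ large. Condition (b) applied to $[-M,M]^c \subset [-\eps,\eps]^c$ yields $\nu([-M,M]^c)\leq C(\eps/M)^\mu m_\eps$, so $\IP(J_\eps>M(\eps))\leq \eps^\beta$ already for $M(\eps)=\eps^{-\gamma}$ with $\gamma>0$ small; choosing $\beta$ large thus renders the second term $\leq\eps$.

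The main obstacle is to verify that $\log |\range(\tilde X)| \leq c_1' F(\eps)$, i.e., to translate the average bit budget $H(\hat X)\leq c_1 F(\eps)$ from Theorem \ref{thm:upperentropy} into a \emph{worst-case} bit count on the typical event $A_\eps$. Using the explicit structure of $\Theta_{\eps,b,m}$ from Section \ref{sec:upper}, this worst-case cost splits into a lattice encoding of the small-jump and diffusion part of size $\exp(O(F_1(\eps)))$, plus the encoding of at most $N(\eps)$ large-jump magnitudes on a geometric grid covering $(\eps,M(\eps)]$. Whereas the expected cost of the second contribution is exactly of order $F_2(\eps)$, its worst-case analogue on $A_\eps$ must be shown to remain of the same order; this is where (b) is essential, as it controls the distribution of $|\Delta X|/\eps$ on $\{|\Delta X|>\eps\}$ and caps the per-jump cost by a constant multiple of its average. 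Together with the Poisson truncation and a mild adjustment of $c_1$, this delivers the desired range bound and completes the proof.
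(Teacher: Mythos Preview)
Your overall architecture---truncate the entropy-coded reconstruction to a ``typical'' event and absorb the atypical part via H\"older and assumption (a)---matches the paper's. The gap is in the choice of the typical event and the resulting worst-case bit count.

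Your event $A_\eps=\{N_\eps\le N(\eps)\}\cap\{J_\eps\le M(\eps)\}$ does not control the bit-length of $\Theta_{\eps,b,m}(X)$ to order $F(\eps)$. Two separate problems arise. First, on $A_\eps$ the cost of encoding the large-jump magnitudes on a geometric grid up to $M(\eps)=\eps^{-\gamma}$ is $N(\eps)\cdot\log(M(\eps)/\eps)\approx m_\eps\,(1+\gamma)\log(1/\eps)$, and for an $\alpha$-stable process this is $\eps^{-\alpha}\log(1/\eps)\gg F(\eps)\approx\eps^{-\alpha}$; so your range bound fails already in the model example. Condition (b) does \emph{not} ``cap the per-jump cost by a constant multiple of its average''---it is a moment condition on $|\Delta X|/\eps$, not a uniform bound. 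Second, $A_\eps$ says nothing about the number $M$ of exit times of the diffusive/small-jump part, nor about the time-encoding cost $\sum_i\log U_i^{-1}$; both are random and unbounded on $A_\eps$, so your claimed $\exp(O(F_1(\eps)))$ lattice size for the small-jump part is unjustified in the worst case.

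The paper resolves this by truncating on the bit-length components themselves rather than on $N_\eps$ and $J_\eps$: the typical event $\cT$ is the intersection of $\{M\le C_1 F(\eps)\}$, $\{\sum_t\log_+|\Delta X_t|/\eps\le C_2 F(\eps)\}$, and $\{\sum_i(1+\log U_i^{-1})\le C_2 F(\eps)\}$. On $\cT$ the bit-length is trivially $O(F(\eps))$ by construction, and the task becomes showing $\IP(\cT^c)\le \eps^r$. The crucial use of (b) is in an exponential Chebyshev bound: the compensation formula gives
\[
\IE\exp\Bigl(\mu\sum_{t}\log_+\tfrac{|\Delta X_t|}{\eps}\Bigr)=\exp\Bigl(\int_{|x|>\eps}\bigl((|x|/\eps)^\mu-1\bigr)\nu(dx)\Bigr)\le e^{E F(\eps)},
\]
so $\IP\bigl(\sum_t\log_+|\Delta X_t|/\eps>C_2 F(\eps)\bigr)\le e^{-(C_2\mu-E)F(\eps)}$. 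Similar exponential bounds hold for $M$ and for $\sum_i\log U_i^{-1}$. Finally, (b) forces $\nu([-\eps,\eps]^c)\to\infty$, which yields $F(\eps)\gtrsim\log(1/\eps)$ and hence $e^{-CF(\eps)}\le\eps^r$; this is the second place (b) enters, which your sketch also misses.
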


In the proofs of the upper bounds we only need to consider the case where $F_2$.
Indeed, in the second theorem, assumption (a) implies the finiteness of $F_2$.

\begin{remark} Let us comment on the conditions in Theorem~\ref{thm:upperquant}: Condition (a) is natural, though one could soften it by the use of Orlicz norms.  Moreover, condition (b) is needed to guarantee that typical realizations of the L\'evy process dominate the quantization complexity of the process (see equation (\ref{eqn:concentr4})). Essentially, (b) does not hold if the L\'evy measure is finite or if $\nu([-\eps,\eps]^c)$ does not grow to infinity fast enough, when $\eps$ tends to zero.

With given L\'{e}vy measure, it is usually easy to verify conditions (a) and (b), cf.\ Remarks~\ref{rem:conda} and~\ref{rem:condbc} below.
\end{remark}

\begin{remark} Another approach for the quantization of L\'{e}vy processes is taken in \cite{LP}. There, linear quantizers are constructed, and a relation of quantization to the path regularity of processes is outlined. However, as observed by Creutzig \cite{creutzig2006}, linear approximations are not optimal whenever $s>p$.
In this article we work with non-linear quantizers, which lead to better -mostly weakly optimal- results.  \end{remark}

The corresponding lower bound reads as follows.

\begin{theo}[Lower bound]\label{th516-1} There exist universal constants $c_1,c_2, c_3>0$ such that the following holds. For every L\'evy process $X$ with finite $F_2$, any $\eps>0$ with $F_1(\eps)\geq c_3$ one has
$$
D(c_1 F(\eps),1)\geq c_2 \,\eps.
$$
Moreover, if $\nu(\IR)=\infty$ or $\sig\not =0$, one has for any $s>0$,
$$
D(c_1 F_1(\eps),s)\gtrsim c_2 \,\eps
$$
as $\eps\dto 0$.
In the case where $F_2\equiv \infty$, one has $D(r,1)=\infty$ for any $r\ge 0$.
\end{theo}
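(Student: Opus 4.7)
The plan is to reduce the lower bound to two essentially independent pieces of $X$ and then bound each separately. Decompose $X = \bar X^{(\eps)} + Y^{(\eps)}$, where $Y^{(\eps)}$ is the compound Poisson process of all jumps of magnitude exceeding $\eps$ (intensity $\lam_\eps := \nu([-\eps,\eps]^c)$) and $\bar X^{(\eps)}$ is the independent remainder, consisting of the Gaussian component, the small jumps, and the appropriate compensating drift. Given any reconstruction $\hat X$ with $I(X;\hat X)\leq r$ and $\bigl\|\|X-\hat X\|\bigr\|_{L^1(\IP)}\leq c_2\eps$, a standard conditioning and data-processing argument (using the independence of the two summands) produces reconstructions $\hat{\bar X}^{(\eps)}$ and $\hat Y^{(\eps)}$ of the summands whose individual mutual informations essentially add up to $r$, and whose $L^1$-errors are controlled by $c_2\eps$.

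For the Brownian-like component $\bar X^{(\eps)}$: its increment variance per unit time is $v_\eps := \sig^2 + \int_{|x|\le \eps} x^2\,\nu(dx)$, so that $F_1(\eps)=v_\eps/\eps^2$. I would partition $[0,1]$ into $n \asymp F_1(\eps)$ subintervals, so that each blockwise increment has variance $v_\eps/n \asymp \eps^2$, and invoke a small-jump CLT (justified once $F_1(\eps)\geq c_3$) to approximate each increment by a Gaussian of the correct variance. The classical Shannon rate--distortion lower bound for a Gaussian source under $L^1$-type distortion then yields a rate of at least a constant bit per block, for a total of $\gtrsim F_1(\eps)$.

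For the compound-Poisson component $Y^{(\eps)}$: conditional on the number of jumps $N\sim\mathrm{Poisson}(\lam_\eps)$ and their locations, an $L^1[0,1]$-reconstruction with error $\leq c_2\eps$ must recover each jump's amplitude to precision $O(\eps)$. A one-dimensional Shannon lower bound shows that quantizing a jump of size $|Y|$ to precision $\eps$ costs at least $\log(|Y|/\eps) - O(1)$ bits. Taking expectation over $N$ and the jump-size distribution $\nu|_{[-\eps,\eps]^c}/\lam_\eps$ yields $I(Y^{(\eps)};\hat Y^{(\eps)}) \gtrsim F_2(\eps)$. Combining with the Brownian-like bound gives the claimed rate lower bound $c_1 F(\eps)$, proving the first assertion. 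For the second assertion I would note that $\nu(\IR)=\infty$ or $\sig\ne 0$ forces $F_1(\eps)\to\infty$ as $\eps\dto 0$, so the hypothesis $F_1(\eps)\geq c_3$ is eventually in force, and the lower bound from the Brownian-like component alone yields $D(c_1 F_1(\eps),s)\gtrsim c_2\eps$; extending from $s=1$ to general $s>0$ is a matter of monotonicity of $L^s$-norms (for $s\ge 1$) and a direct small-variance argument at the Gaussian level (for $s<1$).

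For the third assertion, $F_2\equiv \infty$ means $\E[\log(|Y|/\eps)]=\infty$, which forces the jump-size distribution to have infinite differential-entropy-type cost to quantize to any finite precision. A sharpening of the one-dimensional Shannon argument then shows that $I(X;\hat X)=\infty$ whenever the $L^1$-error is finite, so no finite-rate code achieves finite $L^1$-distortion, i.e.\ $D(r,1)=\infty$ for all $r\ge 0$. The main obstacle throughout is the Brownian-like step: making the small-jump Gaussian approximation quantitative enough to transfer the Gaussian rate--distortion lower bound to $\bar X^{(\eps)}$, and carrying out the subadditivity argument that splits the rate $r$ cleanly across the two independent components without incurring more than a constant-order loss.
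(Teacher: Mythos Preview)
Your decomposition $X=\bar X^{(\eps)}+Y^{(\eps)}$ matches the paper's, but both halves of your argument have real gaps.

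\textbf{The $F_2$ piece.} You condition on the jump \emph{locations} and claim that encoding each amplitude $Y$ to precision $\eps$ costs at least $\log(|Y|/\eps)-O(1)$ bits by a one-dimensional Shannon lower bound. That is false: the Shannon lower bound for quantizing a scalar to $L^1$-precision $\eps$ is $h(Y)-\log(2e\eps)$ with $h$ the differential entropy, which has nothing to do with $\log(|Y|/\eps)$. If $\nu$ is atomic (say all jumps have size $\pm 1$) the amplitudes carry zero information, yet $F_2(\eps)\asymp\log(1/\eps)>0$. The paper conditions the other way round: given the amplitudes $H_i$, the jump \emph{positions} are uniform, and the functional $\pi(X)_i=\bigl|\int_{il}^{(i+1)l}(2\ind_{\{t\ge (2i+1)l/2\}}-1)X_t\,dt/l\bigr|$ is then uniform on $[0,|H_i|/2]$. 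Encoding a uniform variable on an interval of length $|H_i|/2$ to accuracy $\eps$ genuinely costs $\log_+(|H_i|/\eps)$ nats, and a Lagrangian optimisation over the allocation of rate among the blocks turns this into the $F_2$ bound.

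\textbf{The $F_1$ piece.} Your plan is a small-jump CLT followed by the Gaussian rate--distortion bound. The distortion-rate function is not lower semicontinuous under weak convergence, so closeness in law of block increments to Gaussians does not by itself transfer a lower bound; you would need a quantitative coupling uniform in $\eps$, which you have not supplied and which is not obviously available. You also do not say how block increments relate to the $L^1[0,1]$-norm distortion. The paper avoids all of this: after symmetrising, it shows via a martingale/Doob estimate that $\bar X^{(\eps)}$ exits an $\eps$-interval in expected time $\lesssim 1/F_1(\eps)$, so that on each of $n\asymp F_1(\eps)$ subintervals the sign of $\pi(X)_i$ equals $\pm 1$ with probability bounded below by a universal constant. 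This reduces the problem to the rate--distortion function of $n$ i.i.d.\ $\{-1,0,+1\}$ symbols under a Hamming-type loss, which is computed explicitly via the Bernoulli Shannon lower bound.

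\textbf{Combining, and general $s$.} You do not need any additive rate-splitting. Since $F\le 2\max(F_1,F_2)$ and $D(\cdot,1)$ is non-increasing, a lower bound $D(c' F_i(\eps),1)\ge c''\eps$ for whichever $F_i$ is larger already gives $D(\tfrac{c'}{2}F(\eps),1)\ge c''\eps$; the paper proves the $F_1$ and $F_2$ bounds separately (each using only that the relevant summand can be measurably separated from $X$, which is the content of the proposition on difference distortion measures) and combines them this way. Your ``standard conditioning and data-processing'' split of $r$ is neither standard nor needed. Finally, monotonicity of $L^s$-norms handles only $s\ge 1$; for $s<1$ the inequality goes the wrong way, and the paper instead proves a concentration lemma (a source-coding/typicality argument) showing that for i.i.d.\ single-letter sources the $L^s$ distortion-rate is asymptotically at least the $L^1$ one.
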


\begin{remark}
So far one cannot replace $F_1$ by $F$ in the second statement of Theorem~\ref{th516-1}. Since mostly $F_1$ and $F$ are weakly equivalent when $\eps$ tends to zero, the second estimate typically  leads to sharp results. Nevertheless, it would be interesting to find out, whether one can close this remaining gap.
\end{remark}

Note that we have not specified the basis of the logarithm. However, all results stated above are valid for any basis. The choice of the basis has only an influence on the constants in the theorems. We will work with the basis $2$ when proving the upper bounds, since this seems more appropriate in the context of binary representations. When proving the lower bounds we switch to the natural logarithm.

\subsection{Examples} \label{sec:exa}
In this subsection, we apply the above results to some common L\'{e}vy processes.

\begin{example}[Stable L\'{e}vy process] Let us consider the case of an $\alpha$-stable L\'{e}vy process. Here we have $\nu(d x) = (C_1 \indi{x<0}+ C_2 \indi{x>0}) |x|^{-\alpha-1}\, d x$, and one can easily verify that $F_1(\ep)=C^1_\alpha \ep^{-\alpha}$ and $F_2(\ep)=C^2_{\alpha} \ep^{-\alpha}$. All assumptions of the main theorems are satisfied and we conclude that for all moments $s_1>0$, $s_2\in(0,\alpha)$ and all $p\geq 1$,
$$
D(r,s_1)\approx D^{(e)}(r,s_1)\approx D^{(q)}(r,s_2)\approx r^{-1/\alpha}.
$$
This improves results from \cite{creutzig2006} and \cite{LP}.
\end{example}

Note that the coding complexity $\alpha$-stable L\'{e}vy process is smaller than the one of a $2$-stable L\'{e}vy  process, i.e.\ Brownian motion. In fact, this is true for all L\'{e}vy process.

\begin{example}[L\'{e}vy process with non-vanishing Gaussian component]
It is easy to calculate that $F_i(\eps) \leq c \eps^{-2}$ for $i=1,2$. Therefore, if $\sigma\neq 0$ then $F(\eps)\approx F_1(\eps) \approx \eps^{-2}$.

This has two implications. Firstly, in presence of a Gaussian component, the coding complexity of the L\'{e}vy process is the same as for Brownian motion, as long as our results apply. In case $\sigma=0$, the coding complexity is weakly bounded from above by that of Brownian motion.

More precisely, $$D^{(e)}(r,s)\leq C r^{-1/2},\qquad\text{for any L\'{e}vy process,}$$
and $$D^{(e)}(r,s)\approx r^{-1/2},\qquad\text{if $\sigma\neq 0$.}$$
On the other hand, under the assumptions (a) and (b), $$D^{(q)}(r,s)\leq C r^{-1/2},\qquad\text{for any L\'{e}vy process,}$$
and, $$D^{(q)}(r,s)\approx r^{-1/2},\qquad\text{if $\sigma\neq 0$.}$$

In fact, by a modification of (\ref{eqn:concentr4}) one can show that (b) is not necessary if $\sigma\neq 0$.
\end{example}

\begin{example}[Gamma process] Let us consider a standard Gamma process. In this case, $\nu(d x) = \indi{x>0} x^{-1} e^{-x} d x$ and one gets $F_1(\eps)\approx \log 1/\eps$ and $F_2(\eps)\approx (\log 1/\eps)^2$. Consequently,  for fixed $p,s\in[1,\infty)$, there exist constants $c_1,c_2,c'_1,c_2'\iR_+$ such that for all $\eps \geq0$
$$D^{(e)}( c_1 (\log 1/\eps)^2, s) \leq c_2 \eps
$$
and
$$
D(c'_1 (\log 1/\eps)^2, s) \geq c_2' \eps.
$$
Therefore,
$$
D(r,s)=\exp\bigl(-e^{\cO(1)} \, \sqrt r\bigr)\qquad \text{and}\qquad D^{(e)}(r,s)=\exp\bigl(-e^{\cO(1)} \, \sqrt r\bigr).
$$
Note that Theorem~\ref{thm:upperquant} does not apply since condition (\ref{eqn:addcond}) fails to hold.
\end{example}

\begin{example}[Compound Poisson process] \label{exa:pp} Let $(N(t))_{t\geq 0}$ be a standard Poisson process. Let furthermore $Y, Y_1, Y_2, \ldots$ be i.i.d.\ random variables that are not a.s.\ equal to $0$  and independent of the Poisson process. Then
$$X(t) := \sum_{i=1}^{N(t)} Y_i
$$
is a compound Poisson process, i.e.\ a L\'{e}vy process with L\'{e}vy measure $\nu=\P_Y$ and drift $b=\E [Y \indi{|Y|\leq 1}]$.

It is immediately clear that $F_1(\eps)\leq 1$ and $F_2(\eps) \approx \E \left[\log\left( \frac{|Y|}{\eps}\right)\indi{|Y|\geq\eps}\right]$ so that $F_2$ dominates $F$ when $\eps$ is small. Thus the main complexity is induced  by the ``large jumps''. For fixed $p, s\in[1,\infty)$, the main theorems imply the existence of constants $c_1,c_2,c'_1,c_2'\iR_+$ such that
$$D^{(e)}\left(  c_1 \E \left[\log\left( \frac{|Y|}{\eps}\right)\indi{|Y|\geq\eps}\right], s\right)\leq c_2 \eps$$
and
$$
D\left(  c'_1 \E \left[\log\left( \frac{|Y|}{\eps}\right)\indi{|Y|\geq\eps}\right], s\right)\geq c'_2 \eps
$$
Hence,
$$
D(r,s)=\exp\bigl(-e^{\cO(1)} \,  r\bigr) \text{ and } D^{(e)}(r,s)=\exp\bigl(-e^{\cO(1)} \,  r\bigr).
$$
A more precise result for a subclass of compound Poisson processes was already obtained in the dissertation of Vormoor \cite{Vor07}. In particular, in those cases, the rates of quantization and entropy coding error differ.

Note that in the case of a compound Poisson processes we cannot use Theorem \ref{thm:upperquant} on the quantization error, since condition (b) is not satisfied.
\end{example}

\section{Upper Bounds} \label{sec:upper}
\subsection{An Explicit Coding Strategy}
In this subsection, we describe an explicit coding strategy that can be used to encode a L\'{e}vy process. We derive that the strategy has a mean error of order $\ep$ and that the bit complexity is given by the quantity in (\ref{eqn:complexityoverall}). In the following subsections we use this strategy in order to prove upper bounds for the entropy coding error and the quantization error.

The reconstruction $\hat{X}=\Theta_{\eps,b,m}(X)$ will be a step function with the step heights being integer multiples of $\ep$, i.e.\ we use an $\ep \IZ$ grid to approximate $X$. For this purpose, let us define $\gri$ to be a nearest neighbour projection of $\IR$ onto $\ep \IZ$. As a first step, we subtract the drift of the process by setting $X'(t):=X(t)-b(\eps) t$, where $b(\eps)$ is a drift compensation term given by $$b(\eps):=b-   \int_{[-1,1] \backslash [-\eps,\eps]} x \, \nu(d x)+ \int_{[-\eps,\eps] \backslash [-1,1]} x \, \nu(d x).$$

\paragraph{Notation.} Set $S_0:= 0$ and let
$$S_{i} := \inf\left\lbrace t>S_{i-1} : |X_t' - \gri(X'_{S_{i-1}})| \geq 2\ep \right\rbrace;\qquad i\iN,$$
 be the first exit time of the process $\left( X_s' - \gri(X_{S_{i-1}}')\right)_{s\geq S_{i-1}}$ from the interval $[-2\ep,2\ep]$.

Let $M:=\max\{ i : S_i< 1 \}$. Some of the stopping times $S_i$ are induced by jumps larger than $\eps$. These shall be called {\em large jumps}.

\paragraph{Coding procedure.} Note that it is possible to detect the jump points $(S_i)_{i=1,\dots,M}$ by a single swipe through the interval $[0,1]$.
For each jump we encode its height and its time separately by using prefix-free representations:
we use a prefix-free representation for the integers $\Upsilon_1:\IZ\backslash\{0\}\to\{0,1\}^*$ (as outlined in Lemma~\ref{lem:code2}) to code the number $H_i/\eps\in\IZ$, where $H_i:=\gri(X_{S_i}')-\gri(X_{S_{i-1}}')$ denotes the discretised height.
Moreover, the time approximation $\hat S_i$ to $S_i$ is chosen in such a way that
\begin{align}\label{eq614-1}
S_i\leq \hat{S}_i< S_{i+1}\quad \text{ and }\quad\hat{S}_i-S_i\leq \ep^p/(|H_i|^p M).
\end{align}
For a visualization, cf.\ Figure~\ref{fig:jumps}. Concretely, we choose $\hat S_i$ as follows. By Lemma~\ref{lem:code1}, there is a coding scheme $\Upsilon_2 : \R\times \R_{>0} \to \{ 0,1 \}^*$, where, for $r\in[0,1]$, $\delta>0$, $\Upsilon_2(r,\delta)$ is the binary representation of a number $\overline{\Upsilon}_2(r,\delta) \in \bigcup_{n\ge0} 2^{-n} \IZ\cap [0,1]$ such that $\overline{\Upsilon}_2(r,\delta)\in [r,r+\delta]$.

We transmit the information in the following way: we divide the interval $[0,1)$ into $\lceil F_1(\eps)\rceil$ `boxes' (i.e.\ intervals) $I_j =[jF_1(\eps),(j+1) F_1(\eps)\wedge 1)$, $j=0,\dots,\lceil F_1(\eps)\rceil-1$. Each jump $S_i$ ($i=1,\dots, M$) is translated into the code
$$
\pi_i:= \text{`}0\text{'} *\Upsilon_1(H_i/\eps) * {\Upsilon}_2\left(F_1(\eps) S_i - \lfl F_1(\eps) S_i \rfl,F_1(\eps)\left( S_{i+1} - S_i\right)\wedge F_1(\eps)\ep^p/(|H_i|^p M)\right),
$$
where $*$ denotes the concatenation of strings. Note that $F_1(\eps) S_i - \lfl F_1(\eps) S_i \rfl$ is exactly the difference between the actual jump point and the left corner of the box, scaled on the unit interval. Then each block $j$ is described by the string
$$
\Pi_j:= \prod_{\{i: S_i\in I_j\}} \pi_i,
$$
and finally the complete information is encoded as
$$
\prod _{j=0}^{\lceil F_1(\eps)\rceil-1} \bigl(\Pi_j * \text{`}1\text{'} \bigr).
$$
It is easy to check that this provides indeed a prefix-free representation of $\bigl((\hat S_i,H_i)_{i=1,\dots,M},M\bigr)$, and the corresponding approximation defines a deterministic map $\Theta_{\eps,b(\eps),F_1(\eps)}$ by
 $$\hat X_t := \Theta_{\eps,b(\eps),F_1(\eps)}(X)(t) := b(\eps) t + \sum_{i=1}^M H_i \indi{\hat{S}_i \leq t},$$
 where $$\hat S_i := a_{\text{box},i} + \overline{\Upsilon}_2\left(F_1(\eps) S_i - \lfl F_1(\eps) S_i \rfl,F_1(\eps)\left( S_{i+1} - S_i\right)\wedge F_1(\eps)\ep^p/(|H_i|^p M)\right)$$ and $a_{\text{box},i}$ is the left corner of the box that contains $S_i$. Note that, in order to decode this value, it is sufficient to transmit a code for $F_1(\eps) S_i - \lfl F_1(\eps) S_i \rfl$. The chosen precision ensures (\ref{eq614-1}). Note that the parameters $\eps$, $b(\eps)$ and $F_1(\eps)$ describe the approximation scheme uniquely.

For convenience we will also consider the drift adjusted reconstruction $\hat X'$ defined by
$$
\hat X'_t:= \sum_{i=1}^M H_i \indi{\hat{S}_i \leq t}.
$$

 \begin{figure}
    \centering
        \epsfig{file=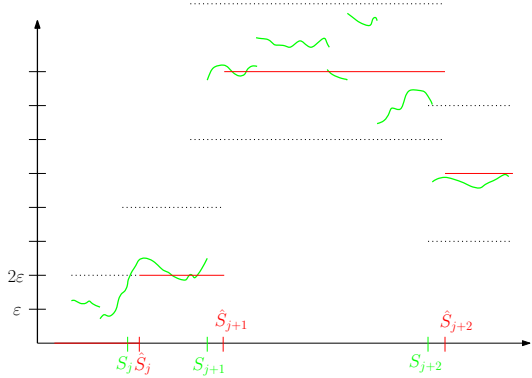,scale=0.4}
    \caption{The coding procedure}
    \label{fig:jumps}
\end{figure}

\paragraph{Waiting time for the jumps.} Let us estimate the waiting time for subsequent jumps. For this purpose, let $X^{(1)}$ be the process consisting of the (finitely many) jumps of $X'$ that are greater than $\eps$ and set $X^{(2)}:=X'-X^{(1)}$. Note that $X^{(2)}$ is a $(\left.\nu\right|_{[-\eps,\eps]},\sigma^2)$-L\'evy martingale. Denote by $\Gamma_1$ the stopping time induced by the first  jump of $X^{(1)}$. Note that  $|X_{S_{i-1}}'-g(X_{S_{i-1}}')|\le \eps/2$ a.s.\ so that due to the strong Markov property one has for all $t\geq 0$,
 \begin{eqnarray*} \pr{S_{i} - S_{i-1} \leq t~|~\cF_{S_{i-1}}} &\leq& \pr{ \sup_{0 < s \leq t} |X_s^{(2)}| > \frac32 \ep} + \pr{\Gamma_1 \leq t}\\
 &\leq& (3 \ep/2)^{-2} \E \sup_{0<s\leq t} |X_s^{(2)}|^2 + \nu([-\eps,\eps]^c) t\\
 &\leq&  \ep^{-2} \E | X_t^{(2)}|^2 + \nu([-\eps,\eps]^c) t,\end{eqnarray*}
 where the last step is justified by Doob's martingale inequality. By the compensation formula (\cite{bertoin}, p.\ 7) the last term equals
$F_1(\eps) t$.

Let $U_1, U_2, \ldots$ be a sequence of i.i.d.\ random variables. Then we have shown that for all jumps $S_i$, $$ \pr{S_{i} - S_{i-1} \leq t ~|~ \cF_{S_{i-1}}} \leq \max ( t F_1(\ep),1) = \pr{ U_i \leq F_1(\ep)t},$$ for all $t\geq0$ and $i\iN$. Consequently, we can couple the random times  $(S_{i} - S_{i-1})_{i\geq 1}$ with the sequence $(U_i)_{i\geq 1}$  such that  \begin{equation} F_1(\ep)( S_{i} - S_{i-1}) \geq  U_i. \label{eqn:coupling}\end{equation}

\paragraph{Coding error.} First, let us analyse the error of the approximation. With $\tilde X'=(g(X'_t))_{t\in[0,1]}$ one gets
\begin{align}
\|X-\hat X\|  & =  \|X'-\hat X'\| \le \underbrace{\|X'-\tilde  X'\|}_{\le 2\eps} + \|\tilde  X'- \hat X'\|. \label{eqn:lmodstar1}
\end{align}
Moreover, due to property (\ref{eq614-1})
\begin{equation}
\|\tilde  X'- \hat X'\|^p= \sum_{i=1}^M |H_i|^p (\hat S_i- S_i)\le \eps^p, \label{eqn:lmodstar2}
\end{equation}
so that $\|X-\hat X\|\le 3\eps$.

\paragraph{Coding complexity.} Let us count the number of bits needed in the approximation:
\begin{itemize}
\item Each change in a block is indicated by a '$1$' which gives in total $\lceil F_1(\eps) \rceil$ bits.
\item Each pair $(H_i,\hat S_i)$ is initialized by a '$0$' which gives in total $M$ bits.
\item Coding the numbers $H_1/\eps,\dots,H_M/\eps$ by using an appropriate representation $\Upsilon_1$ needs less than
$$
\sum_{i=1}^M 2\Bigl(2+\log\frac{|H_i|}\eps\Bigr)
$$
bits by Lemma \ref{lem:code2}.
\item Coding the numbers $\hat S_1,\dots,\hat S_M$ needs less than
$$
\sum_{i=1}^M 2\left(2+\log_+\frac{F_1(\eps)^{-1}} {\eps^p/(M|H_i|^p) \wedge (S_{i+1}-S_i)}\right)
$$
bits (see Lemma \ref{lem:code1}).
\end{itemize}
Therefore, the total  bit-length  is bounded from above by
$$
2\sum_{i=1}^M \left[ \log\frac{|H_i|}\eps + \log_+\frac{F_1(\eps)^{-1}} {\eps^p/(M|H_i|^p) \wedge (S_{i+1}-S_i)}    \right] +8 M+\lceil F_1(\eps) \rceil .
$$
This equals
$$
2\sum_{i=1}^M \left[ \log\frac{|H_i|}\eps + \log_+\left( \frac{M|H_i|^p} {F_1(\eps) \eps^p } \vee \frac{1} {F_1(\eps) (S_{i+1}-S_i)}\right)    \right] +8 M+\lceil F_1(\eps) \rceil.
$$
By (\ref{eqn:coupling}) and the inequality $\log_+ (x\vee y)\leq \log_+x + \log_+ y$, the latter is less than
\begin{align}\label{eq18-1}
2\sum_{i=1}^M \left[(1+p)\log_+\frac{|H_i|}{\eps} +\log_+ \frac{1}{U_i}\right] + 2 M \log_+\frac{M}{F_1(\eps)} + 8 M+\lceil F_1(\eps) \rceil.
\end{align}

Next, recall from (\ref{eqn:coupling}) that $F_1(\eps) (S_i-S_{i-1})\geq U_i$ so that
$$
F_1(\eps) M \geq \sum_{i=1}^M F_1(\eps) (S_i-S_{i-1}) \geq \sum_{i=1}^M U_i;
$$
and using the convexity of $\log_+(1/\cdot)$ one gets with Jensen's Inequality
$$
\sum_{i=1}^M \log \frac1{U_i}= M \sum_{i=1}^M \frac{1}{M} \log \frac{1}{U_i} \geq M \log_+\frac 1{\sum_{i=1}^M \frac {U_i}M}\geq M \log_+ \frac{M}{F_1(\eps)}.
$$
We conclude with (\ref{eq18-1}) that $$ 2\sum_{i=1}^M \left[(1+p)\log_+\frac{|H_i|}{\eps} +2 \log \frac{1}{U_i}\right] + 8 M+\lceil F_1(\eps) \rceil$$
is an upper bound for the bit-length.

We conclude with (\ref{eq18-1}) that $$ 2\sum_{i=1}^M \left[(1+p)\log_+\frac{|H_i|}{\eps} +2 \log \frac{1}{U_i}\right] + 8 M+\lceil F_1(\eps) \rceil$$
is an upper bound for the bit-length. Denoting for any time $t>0$ the jump at time $t$ by $\Delta X_t=X_t-X_{t-}$ allows us to estimate
$|H_i|\leq |\Delta X_{S_i}|+\frac52 \eps$ so that basic analysis gives
$$
\log_+\frac {|H_i|}{\eps} \leq 5+ \log_+ \frac{|\Delta X_{S_i}|}{\eps}.
$$
Consequently, the bit-length is bounded by
\begin{align}
& 4 \sum_{i=1}^M  \log \frac{1}{U_i} + 2(1+p) \sum_{t\in(0,1]} \log_+ \frac{|\Delta X_{t}|}{\eps}+ (18+10p) M+\lceil F_1(\eps) \rceil \nonumber \\
& \leq K_1(p) \sum_{i=1}^M  [1+\log \frac{1}{U_i}] + K_2(p) \sum_{t\in(0,1]} \log_+ \frac{|\Delta X_{t}|}{\eps} + F_1(\eps)+1,
 \label{eqn:complexityoverall} \end{align}
where $K_1(p)$ and $K_2(p)$ are constants only depending on $p$.

\subsection{Proof of Theorem~\ref{thm:upperentropy}}
\begin{proof} By (\ref{eqn:lmodstar1}) and (\ref{eqn:lmodstar2}) the error (and thus the mean error, for all moments $s>0$) is less than $3 \ep$.

On the other hand, the coding complexity of the algorithm constructed above is given by (\ref{eqn:complexityoverall}). Let us look at what the different terms amount to on average. Note that
$$\E \sum_{t\in(0,1]} \log_+ \frac{|\Delta X_{t}|}{\eps}=F_2(\ep),$$ by the compensation formula (\cite{protter}, p.\ 29). Finally, by Lemma~\ref{lem:recursionunifo}, we have $$\E \sum_{i=1}^{M} (1+\log U_i^{-1}) \leq c F_1(\ep).$$

This shows that the expected bit length of the whole message is less than $c_1 F(\ep)$, with some constant $c_1$ depending only on $p$, as required. \end{proof}

\subsection{Proof of Theorem~\ref{thm:upperquant}}

\begin{proof} We use the coding scheme explained above. However, we encode by the zero function in case that
the number of small jumps, $M$, exceeds $C_1 F(\eps)$, where $C_1$ is a constant to be chosen presently. The same is done if the complexity to encode the jump heights of the large jumps, namely $\sum_{t\in(0,1]} \log_+ |\Delta X_{t}|/\eps$, or the complexity to encode the positions of the jumps, namely $\sum_{i=1}^{M} (1+\log U_i^{-1})$, is larger than $C_2 F(\eps)$, where $C_2$ is a constant to be chosen presently. Let us define $\TT$ to be the event that none of the above cases occurs, i.e.\ the `typical case'.

Note that, by the exponential compensation formula (\cite{bertoin}, p. 8),\begin{multline}
\pr{\sum_{t\in(0,1]} \log_+ \frac{|\Delta X_{t}|}{\ep}  > C_2 F(\eps)} \leq e^{-C_2 \mu F(\eps)} \E e^{\mu \sum_{t\in(0,1]} \log_+ \frac{|\Delta X_{S_i}|}{\ep}}\\
\leq e^{-C_2 \mu F(\eps)} e^{ -\int_{|x|\geq \eps} 1 - \left(|x|/\ep\right)^\mu \, \nu (d x)}
\leq e^{-C_2 \mu F(\eps)} e^{E F(\eps)}
\leq e^{-C_2/2 \mu F(\eps)}, \label{eqn:concentr4}
\end{multline}
where $E$ is some constant depending on the finite constant in (\ref{eqn:addcond}) only. The last step holds for $C_2$ large enough. On the other hand, by the Chebyshev Inequality, $$\pr{\sum_{i=1}^{C_1 F(\eps)} (1+\log U_i^{-1}) > C_2 F(\eps)} \leq e^{-C_2/2\, F(\ep)},$$ for $C_2$ large enough. Finally, one proves, e.g.\ using the same discretization as in (\ref{eqn:discrtrick}), that for $C_1$ large enough, $$\pr{M > C_1 F(\ep) } \leq e^{-C_1/2\, F(\ep)}.$$ Therefore, for some positive constant $C$ depending on $\mu$ and $E$, we have $\pr{\TT^c} \leq \exp(-C F(\eps))$.

Let $r>0$ be chosen by $1/q+1/r=1/s$. Let $\kappa>0$ be chosen small enough such that $C \nu([-\kappa,\kappa]^c) \geq r$. This is possible, since $\nu([-\kappa,\kappa]^c)$ tends to infinity when $\kappa\to 0$, by condition~(b). Then, for $\eps<\kappa$, $$F(\eps)\geq F_2(\eps)=\int_{[-\eps,\eps]^c} \log \frac{|x|}{\eps} \nu(d x) \geq \nu([-\kappa,\kappa]^c) \log \frac{1}{\eps} \geq - \frac{1}{C} \log \eps^r.$$ Thus, \begin{equation} \pr{\TT^c} \leq e^{-C\, F(\eps)} \leq \eps^r. \label{eqn:removecond2}\end{equation}

Note that the bit complexity of our algorithm is constant if $\TT^c$ occurs and, by (\ref{eqn:complexityoverall}), less than $C F(\ep)$ if $\TT$ occurs, where $C$ depends on $\mu$ and $E$. Then we have for the mean error, using the H\"{o}lder Inequality and $s\geq 1$,
\begin{eqnarray*} \left( \E \norm{X-\hat{X}}^s \right)^{1/s} &\leq &
\left( \E \ind_{\TT} \norm{X-\hat{X}}^s \right)^{1/s} + \left( \E \ind_{\TT^c} \norm{X-\hat{X}}^s \right)^{1/s}
\\
&\leq &  c_2 \ep + \left( \E \ind_{\TT^c}^r \right)^{1/r} \left(\E \norm{X}^q \right)^{1/q}
\\
&\leq & c_2 \ep \left[ 1 + \ep^{-1} c_{2}^{-1} \pr{\TT^c}^{1/r} \left(\E \norm{X}^q \right)^{1/q} \right],
\end{eqnarray*}
where the term in brackets is bounded, by assumption (a) and (\ref{eqn:removecond2}). Note that the argument works analogously for $0<s<1$.  \end{proof}

\begin{remark} \label{rem:conda} It is easy to see that condition (a) is equivalent to the condition $$\int_{|x|>1} |x|^q\, \nu( d x ) < \infty.$$ \end{remark}

\begin{remark} \label{rem:condbc} Let us assume that (a) holds. A sufficient condition for (b) to hold is that $\nu([-2\eps,2\eps]^c)\leq c\cdot \nu([-\eps,\eps]^c)$ for some $0<c<1$ and all $0<\eps\leq \eps_0$. This can be seen as follows:
\begin{multline*} \int_{\eps<|x|\leq \eps_0} \left( \frac{|x|}{\eps}\right)^\mu \, \nu(d x) \leq \sum_{k=0}^{\log (\eps_0/\eps)} \int_{2^k \eps < |x| \leq 2^{k+1} \eps} \left(  \frac{|x|}{\eps} \right)^\mu \, \nu(d x)
\\
\leq \sum_{k=0}^{\log (\eps_0/\eps)} \nu([-2^k\eps,2^k\eps]^c) 2^{(k+1)\mu}
\leq \sum_{k=0}^\infty c^{k} 2^{(k+1)\mu} \nu([-\eps,\eps]^c).\end{multline*}
Choosing $0<\mu<(-\log c)\wedge q$ yields $$\int_{|x|> \eps} \left( \frac{|x|}{\eps}\right)^\mu \, \nu(d x)\leq K(\mu,c) \, \nu([-\eps,\eps]^c) + \eps^{-\mu} \int_{\eps_0<|x|\leq 1}  |x|^\mu \, \nu(d x)+ \eps^{-\mu} \int_{|x|>1}  |x|^q \, \nu(d x),$$ which implies (\ref{eqn:addcond}).

Note that, in particular, this is the case if $\eps\mapsto\nu([-\eps,\eps]^c)$ is regularly varying at zero with negative exponent.
\end{remark}

\subsection{Technical tools}
In this section, we prove some technical tools that are needed in the proofs of the main results.
\begin{lemma} Let $\lambda>0$  and let $(U_i)_{i\geq 1}$ be an i.i.d.\ sequence of random variables uniformly distributed in $[0,1]$. For $N:= \min\lbrace n\iN : \sum_{i=1}^n U_i \geq  \lambda \rbrace$ one has $$\E \sum_{i=1}^N (1+\log U_i^{-1}) \leq 6\lceil 2 \lambda\rceil .$$  \label{lem:recursionunifo} \end{lemma}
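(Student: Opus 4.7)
The plan is to bound the sum by two applications of Wald's identity to the first-passage time $N$ of the random walk $S_n := \sum_{i=1}^n U_i$ across the level $\lambda$: once with summands $U_i$ (mean $1/2$) and once with summands $-\log U_i$ (whose mean is $\int_0^1 -\log u \, du = 1$).

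The preparatory step is to verify $\E N < \infty$, so that Wald's identity is applicable. I would do this using the observation $\{N>n\} = \{S_n < \lambda\}$ together with a Hoeffding-type bound on $S_n - n/2$; for $n > 2\lambda$ this yields $\IP(N>n) \leq \exp(-(n-2\lambda)^2/(2n))$, which is summable, so $\E N < \infty$. (One could also invoke the elementary renewal theorem directly.)

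Once integrability is secured, the first application of Wald, to the i.i.d.\ sequence $(U_i)$, reads $\E N / 2 = \E \sum_{i=1}^N U_i$. By the very definition of $N$, $\sum_{i=1}^{N-1} U_i < \lambda$, and since $U_N \leq 1$ we get $\sum_{i=1}^N U_i \leq \lambda + 1$, whence $\E N \leq 2(\lambda+1)$. The second application, to the i.i.d.\ sequence $(-\log U_i)$ (which generates the same filtration as $(U_i)$ and is integrable with mean $1$), gives $\E \sum_{i=1}^N \log U_i^{-1} = \E N$. Adding the two identities yields
$$\E \sum_{i=1}^N \bigl(1 + \log U_i^{-1}\bigr) = 2 \E N \leq 4(\lambda+1).$$

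To finish, I would verify the crude arithmetic inequality $4(\lambda+1) \leq 6 \lceil 2\lambda\rceil$: writing $n := \lceil 2\lambda\rceil \geq 1$ (which holds because $\lambda > 0$), one has $4\lambda + 4 \leq 2n + 4 \leq 6n$, as needed. No step is a genuine obstacle; the only care required is in justifying integrability of $N$ to legitimize both invocations of Wald's identity, after which the rest is routine.
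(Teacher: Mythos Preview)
Your proof is correct and takes a genuinely different route from the paper's. The paper argues via a renewal recursion: setting $\Psi(s)=\E\sum_{i=1}^{N(s)}(1+\log U_i^{-1})$ and conditioning on $U_1$ gives $\Psi(s)\le 3+\int_0^1\Psi(s-x)\,dx$; a discretisation trick (replace $U_1$ by $U_1'\in\{0,\tfrac12\}$ with $U_1'\le U_1$) turns this into $\Psi(s)\le 6+\Psi(s-\tfrac12)$, which iterates to $6\lceil 2\lambda\rceil$. Your two applications of Wald are more direct and in fact yield the exact identity $\E\sum_{i=1}^N(1+\log U_i^{-1})=(1+\E\log U_1^{-1})\,\E N$ together with the sharp bound $\E N\le 2(\lambda+1)$ from the overshoot. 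One minor caveat: the paper works with base-$2$ logarithms in this section, so $\E\log U_1^{-1}=\log_2 e\approx 1.44$ rather than $1$; with that value the numerical check $2(1+\log_2 e)(\lambda+1)\le 6\lceil 2\lambda\rceil$ just barely fails when $\lceil 2\lambda\rceil=1$. This is cosmetic---your argument still gives a bound of the form $C\lceil 2\lambda\rceil$ with a universal constant, which is all that is used downstream---and with the natural logarithm your constant $4$ actually beats the paper's $6$.
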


\begin{proof} Let $0\leq s\leq 1$.
Define $N(s):= \min\lbrace n\iN_0 : \sum_{i=1}^n U_i \geq s  \rbrace$ and consider the function
$$\Psi(s) := \E \sum_{i=1}^{N(s)} (1+\log U_i^{-1}).$$
We are interested in $\Psi(\lam)$. Clearly, $\Psi(s)=0$ for $s\leq0$ and $\Psi$ is increasing. Moreover, one has for $s>0$,
\begin{align*} \Psi(s) & = \int \left( 1+ \log x^{-1}
 + \E \sum_{i=1}^{N(s-x)} (1+ \log U_i^{-1}) \right) \, d \P_{U_1}(x)\\
 & = 1+\int_0^1 - \log x\, d x + \int \E \sum_{i=1}^{N(s-x)}(1+ \log U_i^{-1} ) \, d \P_{U_1}(x)\\
 & = 1+\log e + \int \Psi( s - x)\, d \P_{U_1}(x) \leq  3 + \int \Psi( s - x )\, d \P_{U_1}(x).
\end{align*}
Let us define \begin{equation} U_1' := \begin{cases} 0 & U_1 \leq 1/2\\ 1/2 &U_1> 1/2.\end{cases} \label{eqn:discrtrick} \end{equation} Then $U_1'\leq U_1$; and since $\Psi$ is increasing, we have
$$\Psi(s) \leq 3 + \int  \Psi(s-x) \, \P_{U_1'}(x)= 3 + \frac{1}{2}\,  \Psi(s) + \frac{1}{2}\,  \Psi\left(s-\frac{1}{2}\right).$$
Therefore, $\Psi(s) \leq 6 +  \Psi\left(s-\frac{1}{2 }\right)$ and we get that $$\Psi(\lam) \leq 6 +  \Psi\left(\lam-\frac{1}{2}\right) \leq  6 + 6+ \Psi\left(\lam-1\right)\leq \ldots \leq  6\cdot \lceil 2 \lambda\rceil.$$
 \end{proof}

Let us finally gather two facts concerning the coding of integers and real numbers from a given interval, respectively.

\begin{lemma} There is a universal coding scheme that returns a prefix free code $\Upsilon_1(x)\in\{0,1\}^*$ for a given integer $x\in \IZ$ that has a length of at most $2(2+\log x)$ bits. \label{lem:code2}
\end{lemma}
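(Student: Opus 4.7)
The plan is to exhibit an explicit Elias-style self-delimiting code and then verify the length bound by direct calculation. For $x \in \IZ \setminus \{0\}$, let $s \in \{0,1\}$ be a sign bit, let $\ell := \lfloor \log_2 |x| \rfloor + 1$ be the number of bits in the binary expansion of $|x|$, and let $b_1 b_2 \ldots b_\ell$ denote that expansion (so $b_1 = 1$). I define
\[
\Upsilon_1(x) := s * \underbrace{0 \cdots 0}_{\ell-1} * 1 * b_1 b_2 \ldots b_\ell,
\]
i.e.\ the sign bit, followed by the unary encoding $0^{\ell-1}1$ of the length $\ell$, followed by the binary digits of $|x|$.

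To see that $\Upsilon_1$ is prefix-free, I would verify that a decoder reading any concatenation $\Upsilon_1(x_1) * \Upsilon_1(x_2) * \ldots$ can unambiguously locate the end of the first codeword: read the sign bit, then scan forward counting zeros until the first $1$ to recover $\ell$, then read the next $\ell$ bits to recover $|x|$. The position of the end of the codeword is thereby determined from the codeword alone, which is the defining property of a prefix code.

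For the length bound, the codeword has length $1 + \ell + \ell = 1 + 2\ell$ bits. Using $\ell \leq \log_2 |x| + 1$, this is at most $3 + 2\log_2 |x| \leq 4 + 2\log_2 |x| = 2(2 + \log|x|)$, as claimed (reading $\log$ as $\log_2$, consistent with the convention fixed earlier in the article for the upper bounds).

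There is really no main obstacle here: the argument is essentially Elias's gamma code with a prepended sign bit, and both the prefix-freeness and the length estimate are immediate from the construction. The only care needed is to ensure the ``length slot'' grows in lockstep with the ``payload slot'' so that the factor $2$ in front of $\log|x|$ absorbs both contributions, which is exactly why the unary encoding of $\ell$ (costing $\ell$ bits) is the natural choice.
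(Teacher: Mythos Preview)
Your proof is correct and takes essentially the same approach as the paper: both constructions are Elias gamma codes with a prepended sign bit, yielding length $2\ell+1$ where $\ell$ is the bit-length of $|x|$, and the bound $2\ell+1\le 3+2\log_2|x|\le 2(2+\log|x|)$ follows identically. The only cosmetic differences are that the paper uses $1^n0$ for the unary length marker and drops the redundant leading $1$ of the payload, whereas you use $0^{\ell-1}1$ and keep it; the total length is the same.
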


\begin{proof} The sign is encoded by a first bit. Thus, assume $x>0$, because $x=0$ can be encoded by $\text{`}00\text{'}$. Let $n:=\min\lbrace l\in\N ~|~ x<2^l\rbrace$. Then
$2^{n-1}\leq x<2^n$. Consider the representation of $x$ in the
binary system. Because of the definition of $n$, this representation
must have $n$ bits, the first one of which is a `1'.

A prefix free code for $x$ is given by $n$ times `1', followed by a
`0' and the $n-1$ bit long representation of $x$ in the binary
system having taken away the redundant leading `1'.

The length of the code is $2n+1$, which is less than $2 (1+\log_+ x)$.
\end{proof}

Let us remark that Lemma~\ref{lem:code2} can be improved up to the order $\log x + C \log \log x + D$, as shown
in \cite{elias1975}.

\begin{lemma} There exists a universal coding strategy $\Upsilon_2 : \R\times\R_{>0} \to \{ 0,1\}^*$ such that, for any $\delta>0$ and $r\in [0,1]$, $\Upsilon_2$ returns the prefix free binary representation $\Upsilon_2(r,\delta)$ of a number $\overline{\Upsilon}_2(r,\delta)\in [r,r+1]$ with $r\leq \overline{\Upsilon}_2(r,\delta)\leq r+\delta$ that needs at most $2(2-\log \delta)$ bits. \label{lem:code1}
\end{lemma}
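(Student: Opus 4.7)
The plan is to construct $\overline{\Upsilon}_2(r,\delta)$ as a dyadic rational lying in $[r,r+\delta]$ and to transmit it by a self-delimiting binary string. First I would set $n:=\max\{0,\lceil -\log \delta\rceil\}$, so that $2^{-n}\leq\delta\vee 1$, and let $k:=\lceil r\cdot 2^n\rceil\in\{0,1,\ldots,2^n\}$. This gives
$$
r\leq k/2^n \leq r+2^{-n},
$$
so that $\overline{\Upsilon}_2(r,\delta):=k/2^n$ satisfies $\overline{\Upsilon}_2(r,\delta)\in[r,r+\delta]$ (when $\delta\leq 1$ by the choice of $n$, and when $\delta>1$ because $n=0$ forces $k/2^n\leq 1 \leq r+\delta$). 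Clearly also $\overline{\Upsilon}_2(r,\delta)\leq r+2^{-n}\leq r+1$.

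For the code $\Upsilon_2(r,\delta)$ itself, I would use a unary-then-fixed-length rule: first write $n$ in unary ($n$ ones followed by a terminating zero, using $n+1$ bits), then append the $(n+1)$-bit binary representation of $k$, padded with leading zeros. The total length is $2n+2$. Prefix-freeness is immediate: the decoder reads bits until the first zero in order to recover $n$, and then reads the following $n+1$ bits to recover $k$; no codeword can be a prefix of another. Since the construction depends only on $r$ and $\delta$, the map $\Upsilon_2$ is a genuine universal coding strategy.

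Finally, to verify the bit bound, I would observe that $n\leq 1-\log\delta$ in every case, hence $2n+2\leq 2(2-\log\delta)$, as required. I do not foresee any real obstacle; the argument is essentially a bookkeeping exercise, and the factor of $2$ in front of $\log\delta$ leaves precisely enough slack to accommodate both the unary prefix encoding of $n$ and the fixed-length encoding of $k$. (A more economical self-delimiting encoding of $n$ would yield the sharper rate $-\log\delta+O(\log\log\delta^{-1})$, but the cruder unary version already suffices for the stated bound.)
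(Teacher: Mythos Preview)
Your proof is correct and follows essentially the same approach as the paper: pick a dyadic rational in $[r,r+\delta]$ at resolution $2^{-n}\approx\delta$ and encode it with a self-delimiting code of length $O(n)$. The only cosmetic difference is that the paper encodes the dyadic rational by mapping it to an integer and invoking Lemma~\ref{lem:code2}, whereas you use a direct unary-prefix plus fixed-length scheme; both yield the same $2(2-\log\delta)$ bound.
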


\begin{proof} Let $N:=\min\lbrace n ~:~ \delta \geq 2^{-n-1}\rbrace$. We choose $\overline{\Upsilon}_2(1,r,\delta)\in [r,r+1]\cap S_N$ nearest possible, but larger than $r$, where $$S_N:= \bigcup_{n=0}^N 2^{-n} \IZ.$$ This ensures that $0\leq \overline{\Upsilon}_2(1,r,\delta)  - r\leq
2^{-(N+1)}\leq \delta$, as required.

Any number $\hat{r}\in [0,1]\cap S_N$ has a unique representation $\hat{r}=k 2^{-n}$,
with $k$ uneven, $1\leq k\leq 2^n-1$, $1\leq n\leq N$. As a prefix
free code $\Upsilon_2(1,r,\delta)$ for $\overline{\Upsilon}_2(1,r,\delta)$ we chose the prefix free code for the
integer $2^{n-1}+(k+1)/2$. Since $\overline{\Upsilon}_2(\gamma,\delta)\in S_N$, we have to encode
integers from $2$ up to at most $2^N$, which, by Lemma~\ref{lem:code2}, requires at most $2(1+N)$ bits, which is less
than $2(1-\log\delta)$ bits, by the definition of $N$. \end{proof}

\section{Lower bound} \label{sec:lowerbound}

The aim of this section is to provide lower bounds for the distortion rate function of the L\'{e}vy process.
The analysis is divided into three subsections. First we introduce some concepts of information theory and we prove some preliminary results. Next, we provide a lower bound based on $F_2$. In the last subsection we give a lower bound in terms of $F_1$. Both lower bounds then immediately imply Theorem \ref{th516-1}.

So far $p$ is a fixed value in $[1,\infty)$. Since the distortion rate function is increasing in the parameter $p$, we can and will fix $p=1$ in the following discussion.

As mentioned before, we can freely choose the basis of the logarithm in the proof of the main theorems. For the rest of this article, we fix as basis $e$.

\subsection{Preliminaries}

First we  will introduce some concepts of information theory.
We will need the concept of conditional mutual information. Let $A,B$ and $C$ denote random vectors attaining values in some Borel spaces. Then one defines the mutual information between $A$ and $B$ given $C$ as
$$
I(A;B|C)=\int I(A;B|C=c)\,d\IP_C(c),
$$
where
$$
I(A;B|C=c)=\begin{cases}\int \log \frac{d\IP_{A,B|C=c}}{d\IP_{A|C=c}\otimes\IP_{B|C=c}} \,d\IP_{A,B|C=c}&\text{ if } \IP_{A,B|C=c}\ll \IP_{A|C=c}\otimes\IP_{B|C=c}\\ \infty &\text{ otherwise.}\end{cases}
$$
A summary of computation rules for the mutual information can be found in \cite{Iha93}.

\begin{lemma}\label{le515-1}
For $n\iN$, let $Y_0,\dots,Y_{n-1}$ and $\hat Y_0,\dots,\hat Y_{n-1}$ and $H$ denote random variables in possibly different Borel spaces. We write shortly $Y=(Y_0,\dots,Y_{n-1})$, $Y^i=(Y_0,\dots, Y_i)$ for $0\leq i\leq n-1$ and $\hat Y=(\hat Y_0,\dots,\hat Y_{n-1})$. Then one has
$$
I(Y,H;\hat Y) \geq I(Y_0; \hat Y_0|H)+ I(Y_1;\hat Y_1|H,Y^0) +\dots +I(Y_{n-1};\hat Y_{n-1}|H, Y^{n-2}).
$$
\end{lemma}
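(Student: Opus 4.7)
The plan is to prove this via two standard information-theoretic manipulations: the chain rule for (conditional) mutual information, followed by monotonicity under deterministic functions. Both are classical; see, e.g., \cite{Iha93}.

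First I would apply the chain rule to split off $H$ from the first argument:
$$I(Y,H;\hat Y) \;=\; I(H;\hat Y) + I(Y;\hat Y\mid H) \;\geq\; I(Y;\hat Y\mid H),$$
where the inequality uses nonnegativity of mutual information. This reduces the problem to lower-bounding $I(Y;\hat Y\mid H)$.

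Next, I would apply the chain rule iteratively to the tuple $Y=(Y_0,\dots,Y_{n-1})$ on the left-hand argument of the conditional mutual information, picking off one coordinate at a time and conditioning on the previously peeled-off coordinates together with $H$. This yields
$$I(Y;\hat Y\mid H) \;=\; \sum_{i=0}^{n-1} I\bigl(Y_i;\hat Y\,\big|\, H, Y^{i-1}\bigr),$$
with the convention that $Y^{-1}$ is empty (so the $i=0$ term is $I(Y_0;\hat Y\mid H)$).

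Finally, since $\hat Y_i$ is a measurable (coordinate) function of $\hat Y=(\hat Y_0,\dots,\hat Y_{n-1})$, the data processing / monotonicity property of mutual information (applied conditionally, for $\IP_{(H,Y^{i-1})}$-a.e.\ value of the conditioning variables) gives
$$I\bigl(Y_i;\hat Y\,\big|\, H, Y^{i-1}\bigr) \;\geq\; I\bigl(Y_i;\hat Y_i\,\big|\, H, Y^{i-1}\bigr).$$
Summing these inequalities and combining with the two previous displays yields the claim.

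The argument is essentially a bookkeeping exercise with the chain rule and monotonicity, so no real obstacle is expected; the only point that deserves care is the conditional version of monotonicity (integrating the $c$-wise inequality against $\IP_{(H,Y^{i-1})}$), which is standard but should be invoked explicitly.
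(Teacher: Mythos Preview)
Your proposal is correct: chain rule to peel off $H$, chain rule again on the coordinates of $Y$, and then data processing to pass from $\hat Y$ to $\hat Y_i$ in each summand is precisely the standard argument. The paper itself does not supply a proof of this lemma; it merely states it after remarking that computation rules for mutual information can be found in \cite{Iha93}, so your write-up is exactly the routine verification the paper leaves implicit.
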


Moreover, we will need to evaluate the distortion rate function for other originals than the L\'evy process $X$ and for other distortions than $L_p[0,1]$-norm. For a measure $\mu$ on a Borel space~$E$ and a measurable function $\rho:E\times E\to[0,\infty]$ (\emph{distortion measure}) we write
$$
D(r|\mu,\rho)=\inf\bigl\{ \IE[\rho(X,\hat X)] : \hat X \ E\text{-valued r.v.\ with } I(X;\hat X)\leq r\bigr\}.
$$
Moreover, we associate to a map $\rho:E\to [0,\infty]$  the \emph{difference distortion measure}
$\rho:E\times E\to[0,\infty]$ (denoted by the same identifier) given as $\rho(x,\hat x)=\rho(x-\hat x)$.
Sometimes we will also consider  a general moment $s>0$ and write
$$
D(r|\mu,\rho,s)=\inf\bigl\{ \IE[\rho(X,\hat X)^s]^{1/s} : \hat X \ E\text{-valued r.v.\ with } I(X;\hat X)\leq r\bigr\}.
$$
Moreover, we will omit $\rho$ if it is the norm based distortion induced by the $L^1[0,1]$-norm.

The following proposition allows us  to separately consider the influence of the large jumps and the diffusive part with small jumps onto the coding complexity of the L\'evy process:

\begin{propo}\label{propo1t}
Let $E$ be a Borel-space and assume that $(E,+)$ is an Abelian group
such that the sum is Borel-measurable. Denote by $A$ and $B$
independent $E$-valued random elements and suppose that there exists a measurable map $\vphi:E\to E^2$ with
\begin{align}\label{eq16t}
\vphi(A+B)=(A,B)\text{ a.s.}
\end{align}
Then, under any difference distortion measure $\rho$ on $E$, one has for every $r\geq0$:
$$
D(r|\IP_{A+B},\rho)\geq D(r|\IP_A,\rho).
$$
\end{propo}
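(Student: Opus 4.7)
The plan is to show that every admissible reconstruction of $Z:=A+B$ induces an admissible reconstruction of $A$ with identical mean distortion, so that taking the infimum gives the claimed inequality. Given any $E$-valued random element $\hat Z$ with $I(Z;\hat Z)\le r$, I would enlarge the underlying probability space (via a regular conditional distribution of $\hat Z$ given $Z$) so that $A$, $B$, and $\hat Z$ are all defined jointly, with $A,B$ independent of their given laws, $Z=A+B$, and the conditional law of $\hat Z$ given $Z$ the original one. On this space, define $\hat A:=\hat Z-B$.

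Because $\rho$ is a difference distortion measure, $\rho(A,\hat A)=\rho(A-\hat A)=\rho((A+B)-\hat Z)=\rho(Z,\hat Z)$, and hence $\E[\rho(A,\hat A)]=\E[\rho(Z,\hat Z)]$: the mean distortion is preserved exactly. Thus the entire content of the proposition reduces to the bound $I(A;\hat A)\le r$.

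For that I would chain the following standard information-theoretic manipulations:
\[
I(A;\hat A)\;\le\; I(A;(\hat A,B))\;=\; I(A;\hat A\mid B)\;\le\; I(A;\hat Z\mid B)\;\le\; I((A,B);\hat Z)\;=\; I(A+B;\hat Z)\;\le\; r.
\]
The first inequality is monotonicity of $I$ in its second argument; the first equality is the chain rule $I(A;(\hat A,B))=I(A;B)+I(A;\hat A\mid B)$ combined with $I(A;B)=0$ (independence of $A$ and $B$); the next step uses that $\hat A$ is a deterministic function of $(\hat Z,B)$, so given $B$ it adds nothing to $\hat Z$; the third inequality is the chain rule $I((A,B);\hat Z)=I(B;\hat Z)+I(A;\hat Z\mid B)$; and the final equality uses the hypothesis $\vphi(A+B)=(A,B)$ a.s., which says that $A+B$ and $(A,B)$ generate the same $\sigma$-algebra up to null sets. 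Taking the infimum over admissible $\hat Z$ yields $D(r\mid \IP_A,\rho)\le D(r\mid \IP_{A+B},\rho)$. The only delicate point is the construction of the enlarged probability space on which $\hat A=\hat Z-B$ makes sense and $A,B$ remain independent; once that bookkeeping is done, the proof is a routine application of the chain rule and data-processing.
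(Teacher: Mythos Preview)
Your argument is correct. Both the paper and you exploit the same information-theoretic identity $I(A+B;\hat Z)=I((A,B);\hat Z)\ge I(A;\hat Z\mid B)$ coming from the hypothesis $\vphi(A+B)=(A,B)$, but the two proofs diverge from there. The paper does \emph{not} build an explicit reconstruction of $A$; instead it conditions on $B=b$, observes that $\IE[\rho(A,Z-b)\mid B=b]\ge D(I(A;Z\mid B=b)\mid\IP_A,\rho)$, and then uses convexity of the distortion-rate function $D(\cdot\mid\IP_A,\rho)$ together with a supporting tangent line $f$ at $r$ to average over $b$ and conclude $\IE[\rho(A+B,Z)]\ge f(I(A;Z\mid B))\ge f(r)=D(r\mid\IP_A,\rho)$. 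Your route is more elementary: by setting $\hat A=\hat Z-B$ you get $\IE[\rho(A,\hat A)]=\IE[\rho(Z,\hat Z)]$ exactly and then only need $I(A;\hat A)\le r$, which your chain-rule computation delivers without ever invoking convexity. (Incidentally, the ``enlargement'' you flag as delicate is automatic here: on any space carrying $(Z,\hat Z)$ with $Z\sim\IP_{A+B}$ one may simply set $(A,B):=\vphi(Z)$, and the hypothesis guarantees these are independent with the right marginals and $A+B=Z$ a.s.) The paper's tangent-line device is a reusable template that also handles situations where the distortion is not preserved exactly under the shift, whereas your construction is cleaner for the statement at hand.
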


\begin{proof}
Fix $r\geq0$. Next, we use that the distortion rate function  $D(\cdot|\IP_A,\rho)$ is convex. We denote by $f$ a
tangent of $D(\cdot|\IP_A,\rho)$ at the point $r$. Then, for any
random element $Z$ on $E$,
\begin{align*}
\IE[\rho(A+B,Z)]&= \int \IE[\rho(A, Z-b)|B=b] \,dP_B(b)\\
&\geq \int f(I(A;Z|B=b))\,dP_B(b)= f\Bigl( \int
I(A;Z|B=b)\,dP_B(b)\Bigr)\\
&=f(I(A;Z|B)).
\end{align*}
Therefore,
$$
\inf_{\{Z:I(A;Z|B)\leq r\}} \IE[\rho(A+B,Z)]\geq f(r)=D(r|\IP_A,\rho).
$$
On the other hand, by assumption (\ref{eq16t}), $I(A+B;Z)=I((A,B);Z)$ for any random
element $Z$ on $E$. Hence,
$$
I(A+B;Z)=I((A,B);Z) = I(B;Z)+ I(A;Z|B)\geq I(A;Z|B).
$$
Therefore,
\begin{align*}
D(r|\IP_{A+B},\rho)&=\inf_{\{Z: I(A+B;Z)\leq r\}} \IE[\rho(A+B,Z)]\\
&\geq \inftwo{\{Z\text{ r.v.\,on }E:}{I(A;Z|B)\leq r\}} \IE[\rho(A+B,Z)]\geq D(r|\IP_A,\rho).
\end{align*}
\end{proof}

\subsection{Lower bound based on \texorpdfstring{$F_2$}{F2}}

\begin{theo}\label{th1218-1}
There exists some universal constant $c$ such that for all $\eps>0$,
$$
D\Bigl(\frac{\kappa(\eps)} e \, F_2(\eps)\Big|X,L_1[0,1],1\Bigr)\geq c\,\kappa(\eps)\, \eps,
$$
where $\kappa(\eps)=\kappa(\eps,\nu)=\lfl \nu([-\eps,\eps]^c)\rfl/  \nu([-\eps,\eps]^c)$.
\end{theo}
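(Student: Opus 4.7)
\emph{Reduction to the large-jump compound Poisson.} I would split $X=X^{(1)}+X^{(2)}$, where $X^{(1)}$ is the compound Poisson process of jumps of absolute size $>\eps$ (intensity $\lam:=\nu([-\eps,\eps]^c)$, jump law $\mu:=\nu|_{[-\eps,\eps]^c}/\lam$) and $X^{(2)}$ is the independent complementary L\'{e}vy process. Since $X^{(1)}$ and $X^{(2)}$ a.s.\ have no common jumps, the map $X\mapsto(X^{(1)},X^{(2)})$ is measurable (just read off the jumps of $X$ of size $>\eps$), so Proposition~\ref{propo1t} applied to the translation-invariant distortion $\rho=\|\cdot\|$ gives $D(r\mid X,L^1,1)\ge D(r\mid X^{(1)},L^1,1)$. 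It therefore suffices to prove the lower bound for $X^{(1)}$.

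\emph{Localisation to individual jumps.} I would set $n:=\lfl\lam\rfl$ (the case $\lam<1$ is trivial since then $\kappa(\eps)=0$) and restrict attention to the event $G$ that $N\ge n$ and the first $n+1$ inter-jump spacings $T_{i+1}-T_i$ (with $T_0:=0$ and $T_{n+1}:=1$) each exceed $c_0/n$; the Poisson median inequality together with the tail of uniform-spacings statistics gives $\IP(G)\ge c_1>0$ uniformly. Given a candidate reconstruction $\hat X$ and the heights $Y_1,\dots,Y_n$ (which are encoded in $X^{(1)}$), I would extract surrogate jump times $\hat T_i$ by the first level-crossing of $\hat X$ through the midpoint between the consecutive cumulative heights $S_{i-1}$ and $S_i:=S_{i-1}+Y_i$ within the window $[T_{i-1},T_{i+1}]$. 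A Markov-type argument applied on slabs of size $c_0/(2n)$ around each $T_i$ would then yield, on $G$, the key estimate
\[
\sum_{i=1}^n|Y_i|\,|T_i-\hat T_i|\le C_0\,\|X^{(1)}-\hat X\|.
\]

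\emph{Shannon bound and summation.} Using Lemma~\ref{le515-1} with $H:=(N,Y_1,\dots,Y_n,\ind_G)$ and the $T_i$ playing the role of the encoded random variables, one gets $I(X^{(1)};\hat X)\ge\sum_{i=1}^n I(T_i;\hat T_i\mid H,T^{i-1})$. On $G$, conditionally on $H$ and $T^{i-1}$, the density of $T_i$ is $O(n)$ on an interval of length $\ge c_0/n$, so $h(T_i\mid H,T^{i-1})\ge-\log n+O(1)$, and Shannon's classical inequality $R(\delta)\ge h-\log(2e\delta)$, applied with $\delta=\IE|T_i-\hat T_i|$, yields the per-jump rate $\ge\log(|Y_i|/(2en\,\IE|T_i-\hat T_i|))+O(1)$. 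Setting $d_i:=|Y_i|\,\IE|T_i-\hat T_i|$, so that $\sum d_i\le C_0\,\IE\|X^{(1)}-\hat X\|$ by the previous step, Jensen's inequality on the concave $\log$ would deliver
\[
I(X^{(1)};\hat X)\ge\kappa(\eps)F_2(\eps)-n\log\bigl(2eC_0c\kappa(\eps)\bigr)+O(n)
\]
under the assumption $\IE\|X^{(1)}-\hat X\|\le c\,\kappa(\eps)\,\eps$. Choosing $c$ small enough that the $-n\log$ term dominates the $O(n)$ correction, the right-hand side would exceed $\kappa(\eps)F_2(\eps)/e$, contradicting the hypothesis $I\le\kappa F_2/e$ and giving the claim.

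\emph{Main obstacle.} The central difficulty is the level-crossing construction of $\hat T_i$ in the localisation step: one must verify measurably that the weighted jump-time error $\sum|Y_i|\,|T_i-\hat T_i|$ is dominated by the $L^1$-path error, which requires a careful slab partition and a Markov argument tracking where $\hat X$ departs by more than $|Y_i|/4$ from the relevant cumulative height $S_{i-1}$ or $S_i$. A closely related technicality is the conditional differential-entropy bound $h(T_i\mid H,T^{i-1})\ge-\log n+O(1)$ for uniform order statistics on $G$. Finally, the probability lower bound $\IP(G)\ge c_1$ is precisely the source of the factor $\kappa(\eps)$ in the statement, arising from the need to substitute a default reconstruction on $G^c$ at the cost of a universal multiplicative constant.
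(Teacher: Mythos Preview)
Your reduction to $X^{(1)}$ via Proposition~\ref{propo1t} is correct and coincides with the paper's first step. The localisation step, however, contains a fatal gap: the event $G$ has probability tending to zero, not bounded below by a universal constant. The first $n$ inter-arrival spacings of a rate-$\lam$ Poisson process are i.i.d.\ $\mathrm{Exp}(\lam)$, hence
\[
\IP(G)\;\le\;\IP\Bigl(\min_{1\le i\le n}(T_i-T_{i-1})>\frac{c_0}{n}\Bigr)=\bigl(e^{-\lam c_0/n}\bigr)^n=e^{-\lam c_0},
\]
and since $n=\lfl\lam\rfl$ this decays like $e^{-nc_0}$ whenever $\nu([-\eps,\eps]^c)\to\infty$. (Equivalently: the minimum spacing of $n$ uniform order statistics is of order $n^{-2}$, not $n^{-1}$.) With $\IP(G)\to 0$ your entire lower bound degenerates. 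Your identification of the source of $\kappa(\eps)$ is also off: $\kappa(\eps)=\lfl\lam\rfl/\lam\to 1$ as $\lam\to\infty$, whereas $\IP(G)\to 0$; these cannot coincide. Weakening $G$ to require only a positive \emph{fraction} of spacings to exceed $c_0/n$ would survive, but then your conditional-entropy estimate $h(T_i\mid H,T^{i-1})\ge -\log n+O(1)$ collapses, since you would be conditioning on a complicated event involving all of $T_1,\dots,T_n$.

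The paper sidesteps both difficulties by never tracking the random jump times. It partitions $[0,1]$ into $n=\lfl\lam\rfl$ \emph{deterministic} blocks of length $l=1/\lam$; the events $A_i=\{\text{exactly one jump in block }i\}$ are independent with probability $e^{-1}$ each, so no joint spacing control is required. On $A_i$ the jump position is uniform in the block, and the information is extracted not via level crossings of $\hat X$ but through the linear $l^{-1}$-Lipschitz Haar functional
\[
\pi(x)_i=\Bigl|\frac1l\int_{il}^{(i+1)l}\bigl(2\,\ind_{\{t\ge (2i+1)l/2\}}-1\bigr)\,x_t\,dt\Bigr|,
\]
which delivers the distortion comparison your slab/Markov argument is trying to supply in one line. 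Given the jump height $H_i$, the resulting $Y_i$ is uniform on $[0,|H_i|/2]$, and the uniform Shannon bound $D(\bar r\mid\cU[0,|H_i|/2],|\cdot|)\ge c\,|H_i|\,e^{-\bar r}$ together with the chain rule (Lemma~\ref{le515-1}) reduces the problem to the explicit rate-allocation optimisation solved by the water-filling Lemma~\ref{le0116-1}. The factor $\kappa(\eps)=nl$ then appears simply as the fraction of $[0,1]$ actually covered by the $n$ blocks---a discretisation remainder, not a probability.
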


The proof of the theorem is based on the following idea: in order to find an approximation of accuracy $\eps$, one needs to allocate about $\log_+ |X_{t}-X_{t-}|/\eps$  bits (nats) for each big jump.

The problem is related to a minimization problem that we want to introduce now. Let $\Pi$ be a finite non-negative measure on a measurable space $(E,\cE)$ and let $h:E\to [0,\infty)$ denote a Borel-measurable function with
$$
\int \log_+ h(x) \,d\Pi(x)<\infty.
$$
The aim is now to minimize for given $r>0$ the target function
$$
\int  h(x) \exp(- \xi(x))\,\Pi(dx)
$$
over all measurable functions $\xi:E\to [0,\infty)$ satisfying the constraint
\begin{align}\label{constr}
\int \xi(x)\,d\Pi(x) \leq r.
\end{align}

\begin{lemma}\label{le0116-1}
Assuming that $\{h>0\}$ has not $\Pi$-measure zero, the minimization problem
possesses a $\Pi$-a.e.\ unique solution of the form
\begin{align}\label{eq0116-3}
\xi(x)= \log_+ \frac{h(x)}{\lam},
\end{align}
where $\lam=\lam(r)>0$ is an appropriate parameter depending on $r>0$. When the optimal function~$\xi$ is as in (\ref{eq0116-3}),   then the  minimal value of the target function is
$$
\int \lam\wedge h(x) \,\Pi(dx).
$$
\end{lemma}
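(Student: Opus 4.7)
The problem is a constrained convex minimization: the map $\xi\mapsto h(x)e^{-\xi}$ is convex for each $x$, the constraint $\int\xi\,d\Pi\leq r$ is linear, and both objective and constraint integrate pointwise expressions. The natural tool is therefore Lagrangian duality with pointwise minimization.

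First I would fix $\lambda>0$ and minimize the unconstrained pointwise Lagrangian $\xi\mapsto h(x)e^{-\xi}+\lambda \xi$ over $\xi\in[0,\infty)$. If $h(x)>\lambda$, the derivative $-h(x)e^{-\xi}+\lambda$ has a unique zero at $\xi=\log(h(x)/\lambda)>0$, which lies in the feasible half-line; if $h(x)\leq\lambda$, the derivative is nonnegative on $[0,\infty)$ so the minimizer is $\xi=0$. In both cases the minimizer is $\xi^*_\lambda(x):=\log_+(h(x)/\lambda)$, and a direct evaluation gives the optimal pointwise value
\[
h(x)e^{-\xi^*_\lambda(x)}+\lambda\xi^*_\lambda(x)=(h(x)\wedge\lambda)+\lambda\log_+(h(x)/\lambda).
\]
In particular $h(x)e^{-\xi^*_\lambda(x)}=h(x)\wedge\lambda$, which is exactly the claimed formula. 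Uniqueness of this pointwise minimizer holds on $\{h>0\}$ by strict convexity of $\xi\mapsto h(x)e^{-\xi}+\lambda\xi$ and on $\{h=0\}$ trivially (the function reduces to $\lambda\xi$, uniquely minimized at $0$).

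Second I would match $\lambda$ to the budget $r$. The function $G(\lambda):=\int\log_+(h/\lambda)\,d\Pi$ is continuous and nonincreasing in $\lambda$ by dominated and monotone convergence; since $\Pi(\{h>0\})>0$, monotone convergence gives $G(\lambda)\uparrow\infty$ as $\lambda\downarrow 0$, and $G(\lambda)\to 0$ as $\lambda\to\infty$ because $\log_+(h/\lambda)$ decreases to $0$ on $\{h<\infty\}$ and $\log_+h\in L^1(\Pi)$ dominates $\log_+(h/\lambda)$ for $\lambda\geq 1$. Hence for each $r>0$ there exists $\lambda=\lambda(r)>0$ with $G(\lambda)=r$ (the case $r=0$ being trivial with $\lambda=\operatorname{ess\,sup}h$).

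Third, the optimality argument is a one-line Lagrangian comparison. For any admissible $\xi$, integrating the pointwise inequality $h(x)e^{-\xi(x)}+\lambda\xi(x)\geq h(x)\wedge\lambda+\lambda\xi^*_\lambda(x)$ against $\Pi$ yields
\[
\int he^{-\xi}\,d\Pi\;\geq\;\int (h\wedge\lambda)\,d\Pi+\lambda\Bigl(\int\xi^*_\lambda\,d\Pi-\int\xi\,d\Pi\Bigr)\;\geq\;\int (h\wedge\lambda)\,d\Pi,
\]
the last step using $\int\xi^*_\lambda\,d\Pi=r\geq\int\xi\,d\Pi$. If $\xi$ attains the minimum then equality must hold $\Pi$-a.e. in the pointwise Lagrangian inequality, which by the uniqueness of the pointwise minimizer forces $\xi=\xi^*_\lambda$ $\Pi$-a.e.

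The only mild obstacle is the existence step for $\lambda(r)$, where one must rule out pathological behaviour at the endpoints of the range of $G$; everything else is a standard convex duality calculation. Modulo that, the hypothesis $\int\log_+h\,d\Pi<\infty$ is precisely what guarantees $G(\lambda)<\infty$ for $\lambda$ sufficiently large, so the construction is well posed for every $r\geq 0$.
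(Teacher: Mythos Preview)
Your proof is correct and follows essentially the same Lagrangian approach as the paper: both fix a multiplier $\lambda>0$, minimize the pointwise functional $h(x)e^{-\xi}+\lambda\xi$ over $\xi\ge 0$ to obtain $\xi^*_\lambda=\log_+(h/\lambda)$, and then use the intermediate value theorem on $G(\lambda)=\int\log_+(h/\lambda)\,d\Pi$ (finite by the hypothesis $\int\log_+h\,d\Pi<\infty$, tending to $\infty$ as $\lambda\downarrow 0$ since $\Pi(\{h>0\})>0$) to match the budget $r$. Your presentation is in fact slightly more direct than the paper's, which routes the same computation through the convex conjugate $\bar\zeta$ and an auxiliary measure $\tilde\Pi$ with density $h$, but the substance is identical.
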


\begin{proof} The proof is based on a Lagrangian analysis.
Let $\zeta(y)=\exp(- y)$ ($y\in[0,\infty)$) and consider its convex conjugate
$$
\bar \zeta(z)=\inf_{y\geq0} [\zeta(y)+yz]\qquad(z\geq 0).
$$
Let $\lam>0$ and  denote by $\tilde \Pi$  the $\sigma$-finite measure with $\frac{d\tilde\Pi }{d \Pi} (x)=h(x)$.
Now observe that for a non-negative function $\xi$ satisfying the constraint (\ref{constr}) one has
\begin{align}
\int h(x) \exp(-\xi(x)) d\Pi(x)& \geq\int \Bigl[\zeta(\xi(x)) + \lam \frac{\xi(x)}{h(x)}\Bigr] d\tilde\Pi(x) -\lam r \label{eq0426-1}\\
&\geq \int \bar \zeta\left(\frac\lam{h(x)}\right) d\tilde\Pi(x) -\lam r.\label{eq0426-2}
\end{align}
The last expression in this estimate does not depend on the choice of $\xi$. If we can establish equality in the above estimates for certain $\xi$ and $\lam$, then this $\xi$  minimizes the problem.

Next, we note that one has equality in (\ref{eq0426-1}) iff
\begin{align}\label{eq0116-1}\begin{cases}
\int \xi(x) \,d\Pi(x)=r \text{ and}\\
\xi(x)=0 \text{ for }\Pi\text{-a.e. } x \text{  with }h(x)=0.\end{cases}
\end{align}
We need to look for a non-negative function $\xi$ and a parameter $\lam>0$ such that (\ref{eq0116-1}) is valid and such that
\begin{align}\label{eq0116-2}
\bar \zeta\left(\frac\lam{h(x)}\right)= \zeta(\xi(x)) + \frac{\lam }{h(x)} \xi(x) \ \ \text{for }\tilde\Pi \text{-a.e. }x.
\end{align}
It is straightforward to verify that for positive $z$ the function
$$
[0,\infty)\ni y \mapsto \zeta(y)+zy\in(0,\infty)
$$
attains its unique minimum in $y=\log_+\frac 1z$. Therefore, condition (\ref{eq0116-2}) is equivalent to
$$
\xi(x)= \xi_\lam(x):= \log_+\frac{h(x)}{\lam} \ \ \text{for }\tilde\Pi \text{-a.e. }x.
$$
Together with (\ref{eq0116-1}) a sufficient criterion for $\xi$ being a minimum is the existence of a $\lam>0$ such that
$$\begin{cases}
\int \xi(x) \,d\Pi(x)=r \text{ and}\\
\xi(x)=\xi_\lam(x) \text{ for }\Pi\text{-a.e. } x.\end{cases}
$$
Such a $\lam$ exists since the function
$$
g:(0,\infty)\ni \lam \mapsto \int \xi_\lam (x)\,dx \in[0,\infty)
$$
is continuous (due to the dominated convergence theorem) and
satisfies
$$ \lim_{\lam\dto0} g(\lam)=\infty \qquad\text{and}\qquad\lim_{\lam\to \infty} g(\lam)=0.$$
Note that if $\xi$ does not coincide with $\xi_\lam$ $\Pi$-a.e. (where $\lam$ is such that $g(\lam)=r$), then one of the inequalities (\ref{eq0426-1}) or (\ref{eq0426-2}) is a strict inequality so that $\xi$ does not minimize the target function.
\end{proof}

\begin{proof}[ of Theorem \ref{th1218-1}] Fix $\eps>0$.
Due to  Proposition \ref{propo1t} we can assume without loss of generality that $X$ is a pure jump process with jumps bigger than $\eps$.
Next, let $l=1/\nu([-\eps,\eps]^c)$, $n=\lfl 1/l\rfl$ and
$$
r=\frac {nl}e \int_{[-\eps,\eps]^c} \log \frac{|x|}{\eps} \,\nu(dx)=\frac{\kappa(\eps)}{e}\,F_2(\eps).
$$
We will prove that for an arbitrarily fixed reconstruction $\hat X$ with $I(X;\hat X)\leq r$ one has
$$
\IE [ \|X-\hat X\|_{L_1[0,1]}] \geq c nl\, \eps,
$$
where $c>0$ is a universal constant.

We let
$$
\pi:L_1[0,1] \to \ell_1^n,\qquad (x_t) \mapsto \Bigl(\Bigl|\int_{il}^{(i+1)l} (2 \ind_{\{t\geq (2i+1)l/2\}}-1) x_t \,\frac{dt}l\Bigr|\Bigr)_{i=0,\dots,n-1}
$$
and consider
$$
Y=(Y_i)_{i=0,\dots,n-1}=\pi( X) \qquad\text{and}\qquad \hat Y=\pi(\hat X).
$$
The map $\pi$ is  $l^{-1}$-Lipschitz continuous so that
\begin{align}\label{eq0117-2}
\IE[\|Y-\hat Y\|_{\ell_1^n}] \leq l^{-1}  \IE[\|X-\hat X\|].
\end{align}
Moreover, $\pi$ is invariant under uniform shifts on each time interval $[i/n,(i+1)/n)$ so that in particular,
$$
\pi(X)=\pi\Bigl(X-\sum_{i=0}^{n-1} X_{\frac{2i+1}2l} \ind_{[il,(i+1)l)}\Bigr).
$$
Due to the strong Markov property of the L\'evy process, the random variables $Y_0,\dots,Y_{n-1}$ are i.i.d.
We shall derive a lower bound for $\IE[\|Y-\hat Y\|_{\ell_1^n}]$.

For $i=0,\dots,n-1$ consider the events
$$
A_i=\{X\text{ contains in }[il,(i+1)l)\text{ exactly one jump}\}.
$$
and the random vector $H=(H_i)_{i=0,\dots,n-1}$ given by
$$
H_i=\begin{cases} \text{size of the jump in }[il,(i+1)l) & \text{ if }A_i\text{ occurs},\\
0&\text{ otherwise}.
    \end{cases}
$$
Next, denote $Y^i=(Y_0,\dots,Y_i)$ for $i=0,\dots,n-1$ and $Y^{-1}=0$. Our objective is to find a lower bound for
\begin{align}\label{eq0426-3}
\IE[\|Y-\hat Y\|_{\ell_1^n}]\geq \IE \Bigl[\sum_{i=0}^{n-1} \IE\bigl[ |\ind_{A_i} Y_i - \ind_{A_i} \hat Y_i| \big| H, Y^{i-1}\bigr]\Bigr].
\end{align}
For each  $i\in\{0,\dots,n-1\}$ we analyze the inner expectation.
Let $f_i(h,y^{i-1})=I(Y_i,\hat Y_i|H=h, Y^{i-1}= y^{i-1})$ and consider the random variable
$$
R_i=f_i(H, Y^{i-1}).
$$
Given $H$ and $Y^{i-1}$, the r.v.\ $Y_i$ is uniformly distributed on $[\ind_{\{H_i<0\}} H_i/2, \ind_{\{H_i>0 \}} H_i/2]$.
Therefore,
\begin{align}\label{eq0426-4}
\IE\bigl[ |\ind_{A_i} Y_i - \ind_{A_i} \hat Y_i~|~ \big| H, Y^{i-1}\bigr]\geq D(R_i|\cU[0,|H_i|/2],|\cdot|),
\end{align}
where $\cU[0,u]$ denotes the uniform distribution on $[0,u]$. Now there exists a universal constant $c>0$ such that for any $\bar r\geq0$ and any $u\geq0$
$$
D(\bar r|\cU[0,u/2],|\cdot|) \geq c\,u\, e^{- \bar r}.
$$
Together with (\ref{eq0426-3}) and (\ref{eq0426-4}) we arrive at
$$
\IE[\|Y-\hat Y\|_{\ell_1^n}]\geq c \IE \sum_{i=0}^{n-1} |H_i| \,e^{- R_i}.
$$
With $\Pi$ defined as the product measure $\IP\otimes \sum_{j=0}^{n-1} \delta _j$ we get
\begin{align}\label{eq0426-5}
\IE[\|Y-\hat Y\|_{\ell_1^n}]\geq c \int |H_i| \,e^{- R_i}\, d\Pi (\om, i).
\end{align}
On the other hand,  one has $\IE[R_i]=I(Y_i, \hat Y_i| H, Y^{i-1})$ by definition so that by Lemma \ref{le515-1}
$$
\int R_i \,d\Pi(\om,i) =\sum_{i=0}^{n-1} \IE[R_i] \leq I(Y,H;\hat Y) \leq I(X;\hat X)\leq r.
$$

Now consider the minimization problem for the target function
$$
\int |H_i| \,e^{- R_i}\, d\Pi (\om, i),
$$

where the minimum is taken over all random variables $R_i$ ($i=0,\dots,n-1$) satisfying  $\int R_i \,d\Pi(\om,i)\leq r$.
The law of $H_i$ is $(1-e^{-1}) \delta_0+ \frac{1}{e \,\nu([-\eps,\eps]^c)} \nu|_{[-\eps,\eps]^c}$ so that
$$
\int \log_+ \frac {|H_i|}{\eps} \,d\Pi(\om,i)= \frac ne \int_{[-\eps,\eps]^c}  \log \frac {|x|}{\eps} \frac {\nu(d x)}{\nu([-\eps,\eps]^c)}=r.
$$
Hence, Lemma \ref{le0116-1} implies that the optimal value in the minimization problem is
$$
\int \eps\, \ind_{\{h_j\not=0\}}  \, d\Pi(\om,j)=\frac 1e  n\, \eps.
$$
Together with (\ref{eq0117-2}) and (\ref{eq0426-5}) we get that
$$
\IE[\|X-\hat X\|]\geq  \frac{c}{e}\,  ln \eps
$$
which yields the assertion.

\end{proof}

%
%
%
%

\subsection{Lower bound related to the \texorpdfstring{$F_1$}{F1}-term}

\begin{theo}\label{th0117-1}
There exist positive universal constants $c_1$ and $c_2$ such that the following statements are true.
For any $\eps>0$ with $F_1(\eps)\geq 18$, one has
$$
D\bigl(c_1\,F_1(\eps) ,1\bigr)\geq c_2\,\eps.
$$
If $\nu(\IR)=\infty$ or $\sig\not =0$, then  for any $s>0$, one has
$$
D\bigl(c_1\,F_1(\eps) ,s\bigr)\gtrsim c_2\,\eps
$$
as $\eps\dto0$.
\end{theo}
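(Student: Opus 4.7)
The plan is a Shannon-type lower bound applied to a finite-dimensional projection of the L\'{e}vy process. First, I would apply Proposition~\ref{propo1t} to the decomposition $X=A+B$, where $A$ collects the jumps of modulus greater than $\eps$ and $B$ is the independent $(\nu|_{[-\eps,\eps]},\sigma^2)$-L\'{e}vy martingale. This reduces the problem to a lower bound for $B$, which has variance rate $\tau^2=\sigma^2+\int_{[-\eps,\eps]}x^2\,\nu(dx)\geq\eps^2\bigl(F_1(\eps)-\nu([-\eps,\eps]^c)\bigr)$. Next, for an integer $n$ of order $F_1(\eps)$, I would introduce the zero-mean Haar functionals
$$
\pi_i(x):=\int_{(i-1)/n}^{i/n}\sgn\!\left(t-\tfrac{2i-1}{2n}\right)x_t\,dt,\qquad i=1,\dots,n,
$$
so that $\pi\colon L^1[0,1]\to\ell^1_n$ is a contraction. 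The random variables $Y_i:=\pi_i(B)$ are i.i.d.\ by independence and stationarity of increments, with $\mathrm{Var}(Y_1)\approx\tau^2/n^3$; an integration by parts moreover represents them as stochastic integrals $Y_i=-\int_{(i-1)/n}^{i/n}\Phi_i(t)\,dB_t$ against explicit tent functions $\Phi_i$ bounded in modulus by $1/(2n)$.

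Combining the data processing inequality, the additivity of the distortion-rate function over i.i.d.\ factors, and Shannon's $L^1$ lower bound $D(\rho\mid Y_1,|\cdot|,1)\geq (2e)^{-1}\exp(h(Y_1)-\rho)$ then produces
$$
D(r\mid X,L^1,1)\;\geq\;n\,D(r/n\mid Y_1,|\cdot|,1)\;\geq\;\tfrac{n}{2e}\exp\bigl(h(Y_1)-r/n\bigr).
$$
Provided one can show $h(Y_1)\geq\tfrac12\log(c\,\tau^2/n^3)-O(1)$, plugging in $r=c_1F_1(\eps)$ and $n$ of order $F_1(\eps)$ yields $D(r\mid X)\gtrsim(\tau/\sqrt{n})e^{-c_1}\approx\eps$, as required. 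For $s\geq 1$, the corresponding case of the second assertion follows by monotonicity of $L^s$-norms; for $s\in(0,1)$ one runs the argument with the $L^s$-version of Shannon's lower bound (whose maximum-entropy reference is the generalised Gaussian $q(z)\propto\exp(-\lambda|z|^s)$), which gives the same conclusion with an $s$-dependent but otherwise universal constant.

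The main obstacle is the entropy lower bound on $Y_1$. It fails in general: without the hypothesis $\sigma\neq 0$ or $\nu(\IR)=\infty$, the random variable $Y_1$ may be purely atomic. I would establish it via $h(Y_1)\geq-\log\|f_{Y_1}\|_\infty$ combined with Fourier inversion. The stochastic-integral representation gives
$$
\IE e^{iuY_1}=\exp\!\Bigl(-\int_0^{1/n}\psi(u\Phi_1(t))\,dt\Bigr),
$$
with $\mathrm{Re}\,\psi(v)=\tfrac12\sigma^2 v^2+\int(1-\cos(vx))\,\nu(dx)$. For small $|u|$, a Taylor expansion produces Gaussian decay of order $\exp(-c\tau^2 u^2/n^3)$, contributing a term of order $n^{3/2}/\tau$ to $\int|\IE e^{iuY_1}|\,du$; for large $|u|$, integrable tail is supplied either by the Gaussian component (if $\sigma\neq 0$) or by the divergence of $v\mapsto\int(1-\cos(vx))\,\nu(dx)$ at infinity when $\nu(\IR)=\infty$. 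Carrying out these estimates uniformly in $\eps$ and $n$ is the delicate technical point. The first, unconditional statement ($s=1$ and $F_1(\eps)\geq 18$) reduces to a cruder single-scale variant of the scheme where the density estimate is replaced by an elementary variance- or concentration-based bound, the condition $F_1(\eps)\geq 18$ ensuring a non-trivial rate budget.
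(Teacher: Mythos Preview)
Your reduction via Proposition~\ref{propo1t} to the truncated martingale $B$ is the main error. The identity
$\eps^2 F_1(\eps)=\tau^2+\eps^2\,\nu([-\eps,\eps]^c)$ shows that your variance parameter satisfies $\tau^2=\eps^2\bigl(F_1(\eps)-\nu([-\eps,\eps]^c)\bigr)$ exactly, and this term can vanish or be negligible compared with $\eps^2 F_1(\eps)$. Concretely, take $\sigma=0$ and $\nu=K\delta_1$ with $K\geq 18$: then $F_1(\eps)=K$ for every $\eps<1$, so the first statement must apply, yet $B\equiv 0$ and your scheme gives the trivial bound $D(\cdot\,|\,B)=0$. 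More generally, whenever $\nu([-\eps,\eps]^c)/F_1(\eps)\to 1$ (which can happen even under $\nu(\IR)=\infty$, e.g.\ for $\nu(dx)\sim |x|^{-1}(\log 1/|x|)^{-1}\,dx$ near $0$), your final estimate $\tau/\sqrt{n}\approx\eps\sqrt{1-\nu([-\eps,\eps]^c)/F_1(\eps)}$ is $o(\eps)$ and the bound is too weak even if the differential-entropy step worked. The large jumps genuinely contribute to $F_1$ through the exit time of $X$ from an $\eps$-interval and cannot be stripped off.

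The paper keeps the full (symmetrized) process and avoids differential entropy entirely. After the same Haar projection $\pi$, it discretizes each $Y_i$ to $Z_i\in\{-1,0,1\}$ according to whether $|Y_i|\geq\eps/4$, and works with the distortion $\rho(z,\hat z)=\ind_{\{z\hat z=-1\}}$ against $\{\pm1\}$-valued reconstructions. An exit-time bound (Lemma~\ref{le0115-3}, obtained by projecting jumps onto $[-2\eps,2\eps]$ and computing quadratic variation) yields $\IP(|Y_i|\geq\eps/4)\geq 1/16$ once $n\leq F_1(2\eps)/18$; the problem then reduces to a Bernoulli source under Hamming distortion (Lemmas~\ref{le0115-2} and~\ref{le0427-1}). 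For $s<1$ your appeal to an ``$L^s$-Shannon lower bound'' also does not work as stated, since the distortion-rate function is additive over i.i.d.\ factors only for $s=1$; the paper handles general $s>0$ by a separate concentration argument (Lemma~\ref{le515-2}) showing $n^{-1}D(nr\,|\,\mu_q^{\otimes n},\rho_n,s)\to D(r\,|\,\mu_q,\rho,1)$ as $n\to\infty$, reducing the case $s>0$ back to $s=1$.
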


Let us give some heuristics on the proof of the theorem. As we have mentioned before the drift adjusted process $X'$ needs approximately the time $1/F_1(\eps)$ to leave an interval of length $2\eps$. Assuming that the process is symmetric the process leaves the stripe to either of the sides with equal probability (here one also needs to assume that one starts in the center of the interval). Thus in order to have a coding of accuracy $\eps$ one needs to describe at least in which direction the process left the stripe for most of the exits. This requires about  $F_1(\eps)$ bits.

As the following remark explains, it suffices to prove the theorem for symmetric L\'evy processes.

\begin{remark}
Let $X^*$ denote an independent copy of $X$ and observe that for  $s\in(0,1]$
$$
D(2r~|~\IP_{X-X^*},s)\leq 2^{1/s} D(r~|~\IP_X,s).
$$
The process $X-X^*$ is a symmetric L\'evy process and the functions describing its complexity are
$$
\tilde F_1(\eps)=2 F_1(\eps)\text{ and } \tilde F_2(\eps)=2 F_2(\eps).
$$
\end{remark}

We assume from now on that the L\'evy process $X$ has no drift and a symmetric L\'evy measure~$\nu$.

\begin{lemma}\label{le0115-3}
Let $\eps>0$ and denote
$$
T=\inf \{t\geq0 : |X_t|\geq \eps\}.
$$
Then
$$
\IP(T\geq t) \leq \frac9{4F_1(2\eps)\,t}.
$$
\end{lemma}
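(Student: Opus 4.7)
My plan is to reduce the problem to a small-jump component via independence, apply an $L^2$ optional stopping argument, and finish with elementary estimates.

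I would begin by decomposing $X=Y+Z$, where $Y$ is the $(\nu|_{[-2\eps,2\eps]},\sigma^2)$-L\'evy martingale and $Z$ is the independent compound Poisson process of jumps of absolute size greater than $2\eps$, with rate $b:=\nu([-2\eps,2\eps]^c)$. Let $T_Y:=\inf\{s\geq0:|Y_s|\geq\eps\}$ and let $\tau$ denote the first jump time of $Z$. Since $Y$ has no jump at $\tau$ almost surely, and on $\{\tau<T_Y\}$ one has $|Y_\tau|<\eps$, the jump of $Z$ at $\tau$, of absolute value exceeding $2\eps$, forces $|X_\tau|>\eps$. Combining this with $X_s=Y_s$ for $s<\tau$ gives $T=T_Y\wedge\tau$ almost surely, and by independence of $T_Y$ and $\tau$,
\[
\IP(T\geq t)=\IP(T_Y\geq t)\,e^{-bt}.
\]

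Next, write $V:=\IE Y_1^2=\sigma^2+\int_{[-2\eps,2\eps]}x^2\,\nu(dx)$, so that $4\eps^2 F_1(2\eps)=V+4\eps^2 b$. The process $Y_t^2-Vt$ is a martingale and, since the jumps of $Y$ are bounded by $2\eps$, one has $|Y_{T_Y\wedge t}|\leq 3\eps$ uniformly. Optional stopping at the bounded stopping time $T_Y\wedge t$ yields $V\,\IE[T_Y\wedge t]=\IE[Y_{T_Y\wedge t}^2]\leq 9\eps^2$. Combined with $\IE[T_Y\wedge t]\geq t\,\IP(T_Y\geq t)$, this gives
\[
\IP(T_Y\geq t)\leq\min\!\Bigl(1,\tfrac{9\eps^2}{Vt}\Bigr).
\]

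Setting $a:=V/(4\eps^2)$ so that $F_1(2\eps)=a+b$, the lemma reduces to the numerical inequality $\min(1,9/(4at))\,e^{-bt}\leq 9/(4(a+b)t)$, which I would handle by case analysis. If $at\geq 9/4$, the claim rearranges to $(1+b/a)e^{-bt}\leq 1$, which follows from $e^{bt}\geq 1+bt$ together with $at\geq 1$. If $at<9/4$, the left-hand side is bounded by $e^{-bt}$, so it is enough to check $4(a+b)t\,e^{-bt}\leq 9$; since $4at<9$, one has $4(a+b)te^{-bt}\leq e^{-bt}(9+4bt)$, and the function $u\mapsto e^{-u}(9+4u)$ has derivative $-e^{-u}(5+4u)\leq 0$, hence is decreasing with value $9$ at $u=0$. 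The main subtlety I foresee is the pathwise identification $T=T_Y\wedge\tau$; everything else is routine Doob plus elementary calculus.
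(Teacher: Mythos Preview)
Your argument is correct, but it takes a different route from the paper. The paper avoids the decomposition $X=Y+Z$ altogether: instead it replaces $\nu$ by $\nu^*=\nu\circ\pi^{-1}$, where $\pi$ is the projection onto $[-2\eps,2\eps]$, and observes that the resulting process $X^*$ has the \emph{same} exit time $T$ in law (any jump of size $>2\eps$ still causes exit when truncated to $\pm 2\eps$, since $|X_{T-}^*|\leq\eps$). Because all jumps of $X^*$ are bounded by $2\eps$, the stopped process $X^*_{T\wedge\cdot}$ is a martingale bounded by $3\eps$, and its quadratic variation has compensator exactly $4\eps^2 F_1(2\eps)\,t$. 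A single optional stopping argument then gives $\IE T\leq \tfrac{9}{4F_1(2\eps)}$ directly, and Markov's inequality finishes. Your approach separates the contributions of small jumps ($a=V/4\eps^2$) and large-jump intensity ($b$) and must recombine them via the elementary inequality at the end; the paper's projection trick keeps $F_1(2\eps)=a+b$ intact throughout and sidesteps the case analysis. Both are clean; the paper's is shorter, while yours makes the role of the two mechanisms (diffusive exit vs.\ large-jump exit) more explicit.
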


\begin{proof}
We consider  a L\'evy process $X^*$ with L\'evy measure $\nu^*=\nu\circ \pi^{-1}$ with $\pi:\IR\to[-2\eps,2\eps]$ being the projection onto the interval $[-2\eps,2\eps]$. Then the exit times $T$ and
$$
T^*=\inf\{t\geq0 : |X^*_t|\geq \eps\}
$$
are equal in law. Moreover, the process $X^*_{T^*\wedge \cdot}$ is a by $3\eps$ uniformly bounded martingale and the quadratic variation process $[X^*]$ of $X^*$  is a subordinator with Doob-Meyer Decomposition
$$
[X^*]_t=\bigl([X^*]_t- 4\eps^2 F_1(2\eps)\,t\bigr)+ 4\eps^2 F_1(2\eps) \,t.
$$
Therefore,
\begin{align*}
9\eps^2 \geq \IE (X_{T^*}^2)&= \lim_{t\to\infty} \IE (X_{t\wedge T^*}^2)\\
&=\lim_{t\to\infty} \IE [X]_{t\wedge T^*} = 4\eps^2 F_1(2\eps) \lim_{t\to\infty} \IE(t\wedge T^*)= 4\eps^2 F_1(2\eps)  \,\IE(T^*).
\end{align*}
Consequently,
$$
\IE T^*\leq \frac9{4F_1(2\eps)}
$$
and the assertion follows immediately.
\end{proof}

\begin{lemma}\label{le0115-2}
Let $Y$ be a Bernoulli r.v. Then for $d\in[0,1/2]$
$$
D(d \log 2d+ (1-d)\log 2(1-d)~|~Y,\rho_\mathrm{Ham})\geq d ,
$$
where $\rho_\mathrm{Ham}$ denotes the Hamming distance.
\end{lemma}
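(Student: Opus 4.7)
This is exactly Shannon's converse bound for the rate-distortion function of the fair binary source under Hamming distortion, and I will carry out the standard argument. I interpret $Y$ as Bernoulli$(1/2)$, which matches the intended application in Theorem~\ref{th0117-1} --- the symmetric L\'evy process exits $[-\eps,\eps]$ to either side with equal probability --- and is essentially the only case for which the stated inequality can hold: a biased $Y\sim\mathrm{Bernoulli}(p)$ with $p<d$ is defeated by the deterministic reconstruction $\hat Y\equiv 0$, which achieves $I(Y;\hat Y)=0$ and $\IE[\rho_\mathrm{Ham}(Y,\hat Y)]=p<d$. The goal is thus to show that any $\{0,1\}$-valued $\hat Y$ with $I(Y;\hat Y)\le R(d):=d\log 2d+(1-d)\log 2(1-d)$ satisfies $\IP(Y\ne\hat Y)\ge d$.

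The heart of the argument is one entropy identity. Putting $e:=\IP(Y\ne \hat Y)=\IE[\rho_{\mathrm{Ham}}(Y,\hat Y)]$, the XOR $Y\oplus \hat Y$ is Bernoulli$(e)$ with entropy $h(e):=-e\log e-(1-e)\log(1-e)$. Since $(y,\hat y)\mapsto(y\oplus \hat y,\hat y)$ is a bijection of $\{0,1\}^2$, I will get $H(Y\mid \hat Y)=H(Y\oplus \hat Y\mid \hat Y)\le H(Y\oplus \hat Y)=h(e)$, and consequently
$$I(Y;\hat Y)=H(Y)-H(Y\mid \hat Y)\ge \log 2-h(e)=R(e).$$

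To conclude I will use that $R(\cdot)=\log 2-h(\cdot)$ is strictly decreasing on $[0,1/2]$ (because $h$ is strictly increasing there). Combined with $R(e)\le I(Y;\hat Y)\le R(d)$ and the hypothesis $d\le 1/2$, this forces $e\ge d$; passing to the infimum over admissible $\hat Y$ yields $D(R(d)\mid Y,\rho_\mathrm{Ham})\ge d$. I do not anticipate any substantive obstacle: this is textbook Shannon theory, and the only mildly delicate point is the identity $H(Y\mid \hat Y)=H(Y\oplus \hat Y\mid \hat Y)$, which is immediate from the bijection noted above.
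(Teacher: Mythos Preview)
Your proof is correct and follows essentially the same Shannon lower bound argument as the paper: both exploit that $Y\oplus\hat Y$ (equivalently $Y-\hat Y$ in $\IZ_2$) is a function of $(Y,\hat Y)$ with entropy at most $h(e)$, yielding $I(Y;\hat Y)\ge H(Y)-h(e)=\log 2-h(e)$, and then invert via the monotonicity of $h$ on $[0,1/2]$. Your observation that ``Bernoulli'' must mean Bernoulli$(1/2)$ is also implicit in the paper's use of $H(Y)=\log 2$.
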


\begin{proof}
Interpret $Y$ as a random variable attaining values in the group $\IZ_2$ consisting of two elements. Then $\rho$ can be interpreted as a difference distortion measure on $\IZ_2$, that means for $x,\hat x\in\IZ_2$
$$\rho(x,\hat x)=\rho(x-\hat x):= \ind_{\{x-\hat x=0\}}.$$
Next, note that  for $d\in[0,1/2]$:
$$
\phi(d):=\sup\{ H(Z):  Z \  \IZ_2\text{-valued}, \IE[\rho(Z)]\leq d\} = -d \log d- (1-d)\log (1-d).
$$
We use the concept of the Shannon lower bound to finish the proof: Let $\hat Y$ denote a $\IZ_2$-valued reconstruction  with
$\IE[\rho(Y,\hat Y)]=d\leq 1/2$; then
\begin{align*}
I(Y;\hat Y)&=H(Y)-H(\hat Y|Y)=H(Y)-H(\hat Y-Y|Y)\geq H(Y)-H(\hat Y-Y) \\
& \geq \log 2-\phi(d)=d \log 2d+ (1-d)\log 2(1-d).
\end{align*}
\end{proof}

In the proof we will use that for the Bernoulli distribution $\mu^\text{Ber}$ and Hamming distortion $\rho^{\text{Ham}}$ one has for any $d\in[0,1/2]$ that
$$
D(d \log 2d+ (1-d)\log 2(1-d)~|~\mu^\text{Ber},\rho^\mathrm{Ham})= d .
$$

The proof of the lower bound is based on a comparison with a simpler distortion rate function.
For $q\in[0,1/2]$ let $\mu_q$ denote the measure that assigns probabilities $q$ to $\pm1$ and $1-2q$ to $0$. Moreover denote by
$\mu^{\otimes n}_q$ its product measure, consider the distortion measure
$$
\rho(x,\hat x)= \ind_{\{x \cdot \hat x=-1\}} \qquad(x\in\{\pm1,0\}, \hat x\in\{\pm1\})
$$
and denote
$$\rho_n(x,\hat x)=\sum_{i=0}^{n-1} \rho(x_i,\hat x_i).
$$
As reconstruction we allow any $\{\pm1\}^n$-valued random vector.

\begin{propo}\label{prop0427-1}
For any $r\geq 0$, $n\iN$ and any L\'evy process with symmetric L\'evy measure, one has
$$
D(r|\IP_X,s) \geq \frac \eps {4n^{1/p}} \,D(r|\mu_{q}^{\otimes n}, \rho_{n},s).
$$
where $$q=\frac1{8} \Bigl(1-\frac9{F_1(2\eps)\, l}\Bigr)\vee 0 .$$
\end{propo}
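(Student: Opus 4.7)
The plan is to reduce the $L_1[0,1]$-distortion problem for $X$ to the discrete rate-distortion problem on $(\{-1,0,+1\}^n,\mu_q^{\otimes n},\rho_n)$ by a pathwise data-processing argument built around a shift-invariant averaging functional. Throughout I take $p=1$, as the excerpt already reduces to this case, and write $l := 1/n$.

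I would partition $[0,1]$ into $n$ intervals $I_i := [il,(i+1)l)$ and, following the projection used in the proof of Theorem~\ref{th1218-1}, introduce on each block the shift-invariant statistic
\[
\tilde Y_i(\phi) := \frac{1}{l}\int_{I_i} \bigl(2\,\ind_{\{t \geq (2i+1)l/2\}} - 1\bigr)\,\phi_t\,dt.
\]
I would then define $Z_i \in \{-1,0,+1\}$ as a measurable function of $X|_{I_i}$ so that $\{Z_i = +1\}$ captures the event ``the path $X_t - X_{il}$ exits $[-\eps,\eps]$ early to the positive side and maintains a sufficient positive displacement thereafter,'' arranged so that on $\{Z_i\neq 0\}$ the deterministic bound $|\tilde Y_i(X)|\geq \eps/4$ holds with $\mathrm{sign}(\tilde Y_i(X)) = Z_i$. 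Correspondingly, set $\hat Z_i := \mathrm{sign}(\tilde Y_i(\hat X))\in\{-1,+1\}$, which is a function of $\hat X$ alone. Stationary independent increments make the $Z_i$'s i.i.d., while symmetry of $\nu$ and zero drift force $\P(Z_i=+1) = \P(Z_i=-1) =: \tilde q$.

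The key pointwise step is that $\tilde Y_i$ is $l^{-1}$-Lipschitz for $L_1(I_i)$: $|\tilde Y_i(X)-\tilde Y_i(\hat X)|\leq l^{-1}\|X-\hat X\|_{L_1(I_i)}$. Hence on $\{Z_i\hat Z_i=-1\}$ one has $|\tilde Y_i(X)|\geq \eps/4$ with $\tilde Y_i(\hat X)$ of opposite sign, forcing $\|X-\hat X\|_{L_1(I_i)}\geq l\eps/4$. Summing over $i$ gives the pointwise inequality $\|X-\hat X\|_{L_1[0,1]}\geq \tfrac{\eps}{4n}\rho_n(Z,\hat Z)$, whence $(\E\|X-\hat X\|^s)^{1/s}\geq \tfrac{\eps}{4n}(\E\rho_n(Z,\hat Z)^s)^{1/s}$. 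Since $Z=f(X)$ and $\hat Z=g(\hat X)$, data processing gives $I(Z;\hat Z)\leq I(X;\hat X)\leq r$, and infimizing the right-hand side over admissible $\hat Z$ produces $D(r|\mu_{\tilde q}^{\otimes n},\rho_n,s)$. A short ``upgrade'' coupling --- taking $Z\sim\mu_q^{\otimes n}$ and independently flipping a fraction of its zeros to $\pm 1$ to form a $\mu_{\tilde q}^{\otimes n}$-distributed variable, then applying any $\mu_{\tilde q}$-strategy --- yields the monotonicity $D(r|\mu_{\tilde q}^{\otimes n},\rho_n,s)\geq D(r|\mu_q^{\otimes n},\rho_n,s)$ whenever $\tilde q\geq q$, which closes the argument.

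The main obstacle is the lower bound $\tilde q\geq q = \tfrac18\bigl(1-\tfrac{9}{F_1(2\eps)\,l}\bigr)$ on the probability that $Z_i=+1$. Lemma~\ref{le0115-3}, applied to the exit time of $(X_t-X_{il})_{t\in I_i}$ from $[-\eps,\eps]$ on an early sub-interval of length $\asymp l$, gives that the path exits the stripe before the midpoint with probability $\geq 1-9/(F_1(2\eps)\,l)$, and symmetry supplies a factor $\tfrac12$ for the exit direction. The remaining factor $\tfrac14$ is devoted to post-exit persistence: after the exit, the process may return into the stripe and erode $|\tilde Y_i|$, so one must show that with conditional probability at least $\tfrac14$ the martingale does not reverse by more than a fixed fraction of $\eps$ during the complementary half-interval. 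This persistence estimate is the most delicate point and would be handled by combining the strong Markov property at the exit time with a second application of Lemma~\ref{le0115-3} (equivalently, Doob's maximal inequality for the centred post-exit martingale) to control the maximum reverse excursion.
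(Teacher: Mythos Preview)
Your overall architecture is exactly the paper's: the shift-invariant averaging functional $\tilde Y_i$, the quantization to $\{-1,0,+1\}$, the pathwise Lipschitz bound giving $\|X-\hat X\|\geq \tfrac{\eps}{4n}\rho_n(Z,\hat Z)$, data processing for the mutual information, and a monotonicity-in-$q$ coupling. Two minor points: the paper simply sets $Z_i=\sgn(\tilde Y_i)\,\ind_{\{|\tilde Y_i|\geq\eps/4\}}$ (no auxiliary exit event in the definition), and the monotonicity coupling in the paper goes the other direction --- multiply a $\mu_{\tilde q}$-sample coordinatewise by independent Bernoulli($q/\tilde q$) to get a $\mu_q$-sample --- though your version also works via the Markov chain $Z\to Z'\to\hat Z$.

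The substantive divergence is in the step you flag as ``the most delicate point''. The paper does \emph{not} use a Doob-type persistence estimate; it sidesteps persistence entirely by two devices. First, it centres at the \emph{midpoint} $(2i+1)l/2$ rather than the left endpoint: writing $\tilde X_t=X_{t+(2i+1)l/2}-X_{(2i+1)l/2}$ for $t\in[-l/2,l/2)$ makes $\tilde Y_i=\tfrac1l\bigl[\int_0^{l/2}\tilde X_t\,dt+\int_0^{l/2}(-\tilde X_{-t})\,dt\bigr]$, a sum of two \emph{independent} pieces, each driven by a symmetric L\'evy martingale. Second, the ``post-exit persistence'' is then free by \emph{symmetry}: by strong Markov at the one-sided exit time $T^+$, the increment integral $\int_{T^+}^{l/2}(\tilde X_t-\tilde X_{T^+})\,dt$ is the integral of a fresh symmetric process and is therefore nonnegative with probability exactly $\tfrac12$, uniformly in $n$ and $\eps$. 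The same symmetry gives $\P\bigl(\int_0^{l/2}\tilde X_{-t}\,dt\leq 0\bigr)=\tfrac12$, and a further $\tfrac12$ converts the two-sided exit probability from Lemma~\ref{le0115-3} into the one-sided one. This produces the constant $\tfrac18=\tfrac12\cdot\tfrac12\cdot\tfrac12$ with no maximal inequality needed.

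Your proposed route has two concrete problems. Centring at $X_{il}$ interacts badly with the sign-flipping kernel: positive values of $X_t-X_{il}$ on the \emph{first} half-interval subtract from $\tilde Y_i$, so ``early positive exit and stay positive'' does not by itself force $\tilde Y_i\geq\eps/4$. And a Doob-based bound on the post-exit reverse excursion over a window of length $\asymp l$ scales like $l\,F_1(2\eps)$ (after the truncation of Lemma~\ref{le0115-3}), which is not small when $n$ is small relative to $F_1(2\eps)$; the proposition, however, is claimed for \emph{all} $n$. The midpoint-centring plus symmetry argument avoids both issues at once.
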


\begin{proof}
First fix $n\iN$, $r\geq0$ and a reconstruction $\hat X$ with $I(X;\hat X)\leq r$. We denote $l=1/n$ and consider again
$$
\pi:L_1[0,1] \to \ell_1^n, \qquad(x_t) \mapsto \Bigl(\Bigl|\int_{il}^{(i+1)l} (2 \ind_{\{t\geq (2i+1)l/2\}}-1) x_t \,\frac{dt}l\Bigr|\Bigr)_{i=0,\dots,n-1}.
$$
The map $\pi$ is $l^{-1}$-Lipschitz continuous and the random vector
$$
Y:=(Y_i)_{i=0,\dots,n-1}=\pi( X)
$$
consists of i.i.d.\ entries. Additionally, we set $\hat Y=(\hat Y_i)_{i=0,\dots,n-1}=\pi(\hat X)$.
Next, consider random vectors $Z=(Z_i)_{i=0,\dots,n-1}$ and $\hat Z=(\hat Z_i)_{i=0,\dots,n-1}$ defined as
$$
Z_i= \begin{cases} \sgn (Y_i)  & \text{ if } |Y_i|\geq \eps/4\\
0 & \text{ otherwise}
     \end{cases} \ \ \text{ and }  \ \ \hat Z_i=\begin{cases} 1  & \text{ if } \hat Y_i\geq0\\
-1 & \text{ otherwise}.\end{cases}
$$
Recalling the Lipschitz continuity of $\pi$  we get that
$$
\|X-\hat X\| \geq l  \|Y-\hat Y\|_{\ell_1^n} \geq l \frac\eps4 \sum_{i=0}^{n-1} \rho(Z_i,\hat Z_i).
$$
Therefore,
$$
\IE[\|X-\hat X\|^s]^{1/s}\geq \frac{\eps}{4n}\IE[\rho_n(Z,\hat Z)^{s}]^{1/s}.
$$
Certainly, $Z$ is distributed according to $\mu^{\otimes n}_q$, where $q=\IP(Y_1\geq \eps/4)$. Since $I(X;\hat X)\geq I(Z;\hat Z)$ we obtain that in general
$$
D(r|\IP_X,s) \geq \frac \eps {4n^{1/p}} \,D(r|\mu_{q}^{\otimes n}, \rho_{n},s).
$$

Next, we show that $D(r|\mu_{q}^{\otimes n}, \rho_{n},s)$ is increasing in $q$. Indeed, let $0\leq q<q'\leq 1/2$, let $Z$ denote an $\mu_{q'}^{\otimes n}$ distributed r.v., and let $\hat Z$ denote a reconstruction for $Z$ with $I(Z;\hat Z)\leq r$. Moreover, let   $A=(A_0,\dots,A_{n-1})$ be a random vector consisting of i.i.d.\ Bernoulli random variables with success probability $q/q'$ that are independent of $Z$ and $\hat Z$ (for finding such a sequence one might need to enlarge the probability space), and set $\tilde Z:=(\tilde Z_i)_{i=0,\dots,n-1}:=(A_i Z_i)_{i=0,\dots,n-1}$.  Then $\tilde Z$ is $\mu_{q}^{\otimes n}$-distributed and one has
$$
\IE[\rho_n(\tilde Z,\hat Z)]\leq \IE[\rho_n(Z,\hat Z)] \ \text{ and } \ I(\tilde Z;\hat Z)\leq I(A, Z;\hat Z) =I(Z;\hat Z).
$$

It remains to prove that $\IP(Y_i\geq \eps/4)\geq  \frac1{8} \bigl(1-\frac9{F_1(2\eps)\, l}\bigr)$.
We fix $i\in\{0,\dots,n-1\}$ and let
$$
(\tilde X_t)_{t\in[-l/2,l/2)}=(X_{t+\frac{2i+1}2 l}-X_{\frac{2i+1}2 l})_{t\in[-l/2,l/2)}.
$$
The processes $(\tilde X_t)_{t\in[0,l/2)}$ and $(-\tilde X_{-t})_{t\in[0,l/2]}$ are independent L\'evy martingales with L\'evy measure $\nu$. Denote $T^+=\inf\{t\geq0: \tilde X_t\geq \eps\text{ or }t\geq l/2\}$ and observe that
\begin{align*}
\IP\bigl(Y_i\geq \frac \eps4\bigr) &\geq \IP\Bigl(-\int_{0}^{l/2} \tilde X_{-t}\,dt\geq 0 , T\leq l/4, \int_{T}^{l/2} [\tilde X_t-\tilde X_T]\,dt\geq0\Bigr)\\
&= \IP\Bigl(\int_{0}^{l/2} \tilde X_{-t}\,dt\leq 0\Bigr)\, \IP(T\leq l/4)\, \IP\Bigl(\int_{T}^{l/2} [\tilde X_t-\tilde X_T]\,dt\geq0|T\leq l/4\Bigr)\\
&=\frac14\, \IP(T^+\leq l/4).
\end{align*}
Set $T=\inf\{t\geq0: |\tilde X_t|\geq \eps$ or $t\geq l/2\}$. Then the symmetry of $\nu$ together with Lemma \ref{le0115-3} implies that
$$
\IP(T^+\leq l/4)\geq \frac12 \IP(T\leq l/4) \geq \frac12 \Bigl(1-\frac9{F_1(2\eps)\, l}\Bigr)
$$
so that
$$
\IP\bigl(Y_i\geq \frac \eps4\bigr) \geq\frac1{8} \Bigl(1-\frac9{F_1(2\eps)\, l}\Bigr).
$$
\end{proof}

\begin{lemma}\label{le0427-1}
Let $\mu^\mathrm{Ber}$ and $\rho^{\mathrm{Ham}}$ denote the Bernoulli distribution and the Hamming distance, respectively. Then
$$
D(r|\mu_q,\rho)\geq 2q\, D\Bigl(\frac r{2q} \Big|\mu^{\mathrm{Ber}},\rho^\mathrm{Ham}\Bigr).
$$
\end{lemma}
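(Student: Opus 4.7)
The approach is to decompose a $\mu_q$-distributed random variable $Z$ into a magnitude and a sign part. Enlarging the probability space if necessary, let $A := \indi{Z\neq 0}$ and let $S$ agree with $Z$ on $\{A=1\}$ and be an independent fair sign on $\{A=0\}$. Then $A$ is Bernoulli with parameter $2q$, $S$ has law $\mu^{\mathrm{Ber}}$, the two are independent, and $Z=A\,S$.

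Fix now a reconstruction $\hat Z\in\{\pm 1\}$ with $I(Z;\hat Z)\leq r$. The first step is the pathwise identity $\rho(Z,\hat Z)=A\cdot \rho^{\mathrm{Ham}}(S,\hat Z)$: on $\{A=0\}$ one has $Z=0$, hence $Z\hat Z=0\neq -1$ and $\rho(Z,\hat Z)=0$; on $\{A=1\}$ one has $Z=S\in\{\pm 1\}$ and $\indi{S\hat Z=-1}=\indi{S\neq \hat Z}$. Conditioning on $A$ yields
\[ \IE[\rho(Z,\hat Z)]=2q\,\IE[\rho^{\mathrm{Ham}}(S,\hat Z)\,|\,A=1]. \]

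The second step transfers the rate bound to a conditional one. Since $A=|Z|$ is a measurable function of $Z$, one has $I(Z;\hat Z)=I(A,Z;\hat Z)$, and the chain rule gives $I(Z;\hat Z)=I(A;\hat Z)+I(Z;\hat Z|A)\geq I(Z;\hat Z|A)$. On $\{A=0\}$ the random variable $Z$ is a.s.\ zero, so the conditional mutual information there vanishes; on $\{A=1\}$ we have $Z=S$ and therefore $I(Z;\hat Z|A=1)=I(S;\hat Z|A=1)$. Hence $I(Z;\hat Z|A)=2q\,I(S;\hat Z|A=1)$, and the hypothesis $I(Z;\hat Z)\leq r$ forces $I(S;\hat Z|A=1)\leq r/(2q)$.

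Finally, under the conditional law $\IP(\,\cdot\,|\,A=1)$ the random variable $S$ is still $\mu^{\mathrm{Ber}}$-distributed, while $\hat Z$ plays the role of a reconstruction whose conditional mutual information is at most $r/(2q)$. The definition of the distortion-rate function therefore yields
\[ \IE[\rho^{\mathrm{Ham}}(S,\hat Z)\,|\,A=1]\;\geq\; D\!\left(\frac{r}{2q}\,\Big|\,\mu^{\mathrm{Ber}},\rho^{\mathrm{Ham}}\right), \]
and combining this with the identity from the first step and taking the infimum over admissible $\hat Z$ yields the claim. I expect the only delicate point to be the clean split of the mutual information in the second step, for which it is essential that $A$ is a deterministic function of $Z$, so that adjoining $A$ to $Z$ in $I(Z;\hat Z)$ is free.
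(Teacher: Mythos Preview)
Your proof is correct and follows essentially the same route as the paper's: both condition on the magnitude $|Z|$ (your $A$), use that $I(Z;\hat Z)\geq I(Z;\hat Z\,|\,|Z|)$ because $|Z|$ is a function of $Z$, and then invoke the Bernoulli distortion-rate bound on the event $\{|Z|=1\}$. Your explicit decomposition $Z=AS$ with an auxiliary independent sign on $\{A=0\}$ is a harmless cosmetic addition; the paper simply works with $X$ conditioned on $|X|=1$ directly.
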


\begin{proof}
Let $X$ denote a $\mu_q$ distributed r.v.\ and let $\hat X$ denote a $\{\pm1\}$-valued reconstruction with $I(X;\hat X)\leq r$.
Denote $f(\bar x)= I(X;\hat X\big | |X|=\bar x)$ for $\bar x\in\{0,1\}$ and let
$$
\bar r= f(1) \ \text{ and  } \ R=f(|X|).
$$
Then one has $\IE R=I(X;\hat X| |X|) \leq I(X;\hat X) \leq r$ so that due to the non-negativity of $R$
$$
\bar r\leq \frac{r}{\IP(|X|=1)}= \frac r{2q}.
$$
Next, we write
$$
\IE \rho(X,\hat X)= \IE\Bigl[ \ind_{\{X\not=0\}} \IE[ \ind_{\{X\not= \hat X\}}\big| |X|]\Bigr]
$$
and note that conditional on $|X|=1$, $X$ is a Rademacher random variable so that
$$
\IE \rho(X,\hat X)\geq \IP(|X|=1) \, D(\bar r|\mu^{\mathrm{Ber}},\rho^\mathrm{Ham}).
$$
Together with the above estimate for $\bar r$ this completes the proof.
\end{proof}

\begin{proof}[ of Theorem \ref{th0117-1}, $\text{1}^{\text{st}}$ statement]
Let $\eps>0$ with $F_1(2\eps)\geq 18$ and choose $n\iN$ maximal with
$n\leq F_1(2\eps)/18$. Then
$$
q:=\frac18\Bigl(1-\frac{9n}{F_1(2\eps)}\Bigr)\vee 0\geq \frac1{16}.
$$
Additionally, there exists a universal constant $C_3>0$ such that $n\geq C_3 F_1(2\eps)$.
Next, we shall apply Proposition \ref{prop0427-1}.
We fix $r_0<\log 2$ arbitrarily and   set $r=\frac18 nr_0$. Then $r\geq C_1\, F_1(2\eps)$ for some constant $C_1$ only depending on the choice of $r_0$. Thus with Proposition \ref{prop0427-1} one gets
\begin{align}\label{eq515-2}
D(C_1 F_1(2\eps),s)\geq D(r,s)\geq \frac{\eps}{4 n} D\Bigl(\frac18 nr_0\Big|\mu_q^{\otimes n},\rho_n,s\Bigr).
\end{align}

Recall that statement 1 of the theorem considers the case where  $s=1$. But $D\Bigl(\frac18 nr_0|\mu_q^{\otimes n},\rho_n\Bigr)$ is a distortion rate function for a single letter distortion measure and an i.i.d.\ original, and, therefore,
$$
D\Bigl(\frac18 nr_0\Big|\mu_q^{\otimes n},\rho_n\Bigr)= n\, D\Bigl(\frac18 r_0\Big|\mu_q,\rho\Bigr)
$$
The latter distortion rate function has been related to that of a Bernoulli variable in Lemma~\ref{le0427-1}:
$$
D\Bigl(\frac18 nr_0\Big|\mu_q^{\otimes n},\rho_n\Bigr)\geq n\, 2q \, D\Bigl(\frac1{16q}  r_0\Big|\mu^\mathrm{Ber},\rho^\mathrm{Ham}\Bigr).
$$
Since $q\geq 1/16$ the rate in the last distortion rate function is bounded by $r_0<\log 2$ so that the distorion rate function yields a value $C_4>0$ strictly bigger $0$. Altogether,
$$
D(C_1 F_1(2\eps),1)\geq  \frac \eps 2 q C_4 \geq C_2\, 2\eps,
$$
where $C_2=\frac18({C_4}/8)^{1/p}$. Switching from $2\eps$ to $\eps$   finishes the proof of the first assertion.
\end{proof}

The proof of the second statement relies on the following concentration property:

\begin{lemma}\label{le515-2}
Let $\rho:\IR\times \IR \to [0,\infty]$ be a measurable function, let $(U_i)_{i\iN}$ be a sequence of  independent bounded random
variables, and denote by $U^{(n)}$ the random vector $(U_i)_{i=1,\dots,n}$. Supposing that there exists $u^*\iR$ such that
\begin{align}\label{eq515-3}
\IE[\rho(U_1,u^*)^2]<\infty,
\end{align}
one has for any $s>0$ and $r>0$:
$$
\liminf_{n\to\infty} \frac1n \,D(nr|U^{(n)}, \rho_n,s)\ge d,
$$
where $d=D(r|U_1,\rho,1)$ and $\rho_n$ is the single letter distortion measure belonging to $\rho$.
\end{lemma}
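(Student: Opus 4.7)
I would follow the classical single-letterization argument for i.i.d.\ sources. Fix any reconstruction $\hat U^{(n)}$ with $I(U^{(n)};\hat U^{(n)})\le nr$. The independence of the $U_i$'s together with the chain rule for mutual information (argued as in Lemma~\ref{le515-1}) gives
$$\sum_{i=1}^n I(U_i;\hat U_i)\;\le\; I(U^{(n)};\hat U^{(n)})\;\le\; nr,$$
so with $r_i:=I(U_i;\hat U_i)$ and $\bar r:=\tfrac1n\sum_i r_i\le r$, convexity and monotonicity of the single-letter distortion-rate function $\alpha\mapsto D(\alpha|U_1,\rho,1)$ yield
$$\IE\rho_n(U^{(n)},\hat U^{(n)}) = \sum_{i=1}^n \IE\rho(U_i,\hat U_i)\;\ge\; \sum_{i=1}^n D(r_i|U_1,\rho,1)\;\ge\; n\,D(\bar r|U_1,\rho,1)\;\ge\; nd.$$
For $s\ge 1$, Lyapunov's inequality gives $(\IE\rho_n^s)^{1/s}\ge \IE\rho_n\ge nd$, and the conclusion follows upon dividing by $n$.

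The case $s\in(0,1)$ is more delicate because $\|\cdot\|_{L^s}\le\|\cdot\|_{L^1}$, so the mean estimate alone is not sufficient. The plan is to show that $\rho_n/n$ concentrates around its mean as $n\to\infty$, so that its $L^s$- and $L^1$-norms are asymptotically equivalent. Concretely, for any $\delta>0$,
$$(\IE\rho_n^s)^{1/s}\;\ge\; n(d-\delta)\,\IP\bigl(\rho_n\ge n(d-\delta)\bigr)^{1/s},$$
so it suffices to show $\IP(\rho_n\ge n(d-\delta))\to 1$. I would exploit the $L^2$-hypothesis on $\rho(U_1,u^*)$ to reduce, at the cost of an asymptotically negligible perturbation of the mutual-information budget, to reconstructions satisfying the pointwise control $\rho(U_i,\hat U_i)\le C(1+\rho(U_i,u^*))$ --- obtained by replacing $\hat U_i$ by $u^*$ on the atypical event where the distortion is unusually large. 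Combined with the boundedness of the $U_i$'s, this lets one control $\var\rho_n$ by $o((nd)^2)$ via Cauchy--Schwarz on the cross terms, and a Chebyshev estimate then supplies the required concentration. Letting $\delta\to 0$ finishes the argument.

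The main obstacle is the concentration step in the case $s<1$: since each $\hat U_i$ may depend on the whole vector $U^{(n)}$, the summands $\rho(U_i,\hat U_i)$ are not independent and standard concentration for sums of independents is not directly applicable. The reduction to a reconstruction with a pointwise $L^2$-bound has to be executed carefully, so that the mutual-information constraint is preserved up to an $o(n)$ correction; this is precisely the point at which the hypothesis $\IE\rho(U_1,u^*)^2<\infty$ plays a crucial role.
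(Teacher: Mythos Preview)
Your treatment of the case $s\ge1$ is correct and in fact gives the sharper conclusion $\tfrac1n D(nr|U^{(n)},\rho_n,s)\ge d$ for \emph{every} $n$, not only in the limit. The whole difficulty lies in the case $s<1$, and here the variance route you propose does not close.

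The gap is the concentration step. After truncation you obtain a pointwise bound $\rho(U_i,\hat U_i)\le V_i:=C(1+\rho(U_i,u^*))$ with the $V_i$ i.i.d.\ in $L^2$, but a pointwise domination by an i.i.d.\ sequence does \emph{not} control $\mathrm{var}\,\rho_n$: the summands $\rho(U_i,\hat U_i)$ remain fully coupled through $\hat U^{(n)}$, and Cauchy--Schwarz on the cross terms only gives $\IE\rho_n^2\le\bigl(\sum_i\|\rho(U_i,\hat U_i)\|_2\bigr)^2\le n^2\IE V_1^2$, hence $\mathrm{var}\,\rho_n=O(n^2)$, which is useless in Chebyshev. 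Nothing in your argument rules out a reconstruction for which $\rho_n/n$ is, say, bimodal with masses at $0$ and $2d$; crucially, the mutual-information constraint is used only in your mean estimate and never enters the concentration step. (There is also a secondary issue: replacing $\hat U_i$ by $u^*$ on an event depending on $U_i$ is not a post-processing of $\hat U^{(n)}$, so the information budget is not automatically preserved; bounding the excess by $o(n)$ requires a separate argument you do not supply.)

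The paper obtains concentration by an entirely different, information-theoretic device that \emph{does} exploit the rate constraint. From the direct part of Shannon's source coding theorem one takes codebooks $\cC(n)$ of rate $\sim nr$ achieving distortion $\sim nd$ with probability tending to one, and forms a hybrid reconstruction: use the given $\hat U^{(n,1)}$ on the event $\{J=1\}$ where it attains distortion below $(1-\eta)nd$ \emph{and} a likelihood-ratio bound holds, and fall back on the codebook otherwise. A short relative-entropy computation shows the hybrid still has mutual information $\lesssim nr$, so by the single-letter converse its mean distortion is $\ge nd(1-o(1))$; on the other hand the construction gives mean distortion $\lesssim[(1-\eta)\IP(J{=}1)+\IP(J{=}2)]\,nd$, which forces $\IP(J{=}1)\to0$. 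This is precisely the concentration statement $\IP\bigl(\rho_n(U^{(n)},\hat U^{(n,1)})>(1-\eta)nd\bigr)\to1$, and the $L^s$ lower bound follows at once. The hypothesis $\IE\rho(U_1,u^*)^2<\infty$ is used only to control, via Cauchy--Schwarz, the contribution of the codebook on the atypical set $\cT(n)^c$.
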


As one can see in the proof the moment condition (\ref{eq515-3}) can be easily relaxed. Similar ideas  are used   in \cite{Der06b} to prove concentration of the approximation error.

\begin{proof} Without loss of generality we assume that $D(r|U_1,\rho)>0$. Our moment condition implies that $D(\cdot|U_1,\rho)$ is finite, convex and continuous on $[0,\infty)$.
 Following the standard proof of Shannon's source coding theorem, there is a family of codebooks $(\cC(n))_{n\iN}$ such that
\begin{itemize}
\item $\{(u^*,\dots,u^*)\}\subset \cC(n)\subset \IR^n$,
\item  $\log |\cC(n)|\lesssim nr$,
\item $\lim_{n\to\infty}\IP(\cT(n))=1$ for $\cT(n)=\{\min_{\hat u^{(n)}\in \cC(n)} \rho_n(U^{(n)},\hat u^{(n)})< (1+\eps(n)) d\}$ and an appropriate zero-sequence $(\eps(n))_{n\iN}$.
\end{itemize}

For any $n\iN$, let $\hat U^{(n,1)}$ denote an arbitrary reconstruction for
$U^{(n)}$ such that we have $I(U^{(n)},\hat U^{(n,1)})\leq nr$, and let $\hat
U^{(n,2)}=\arg\min_{\hat u^{(n)}\in\cC(n)} \rho_n(U^{(n)},\hat u^{(n)})$. We
fix $\eta\in(0,1)$ arbitrarily and choose
$$
J=\begin{cases} 1 &\text{ if }\log\frac{d\IP_{U^{(n)},\hat
U^{(n,1)}}}{d\IP_{U^{(n)}}\otimes \IP_{\hat U^{(n,1)}}}\leq nr\text{ and }
\rho_n(U^{(n)},\hat U^{(n,1)})\leq (1-\eta)d,\\
2 &\text{ else,}
\end{cases}
$$
and $\hat U^{(n)}=\hat U^{(n,J)}$.

Next, we will use that
$$
I(U^{(n)}; \hat U^{(n)})\leq I(U^{(n)}; \hat U^{(n)}, J) = \inf_Q H(\IP_{U^{(n)}, \hat U^{(n)}, J} \| \IP_{U^{(n)}}\otimes Q),
$$
where the infimum is taken over all probability measures $Q$ on $\IR\times \{1,2\}$ and $H$ denotes the relative entropy. We choose
$$Q=\frac12\bigl[\IP_{\hat U^{(n,1)}}\otimes \delta_1 + Q^*\otimes \delta_2\bigr] \ \ \text{ with } \ \  Q^*=\frac1{|\cC(n)|}\sum_{\hat u^{(n)}\in \cC(n)} \delta_{\hat u^{(n)}}$$
in order to get an appropriate bound for $I(U^{(n)}; \hat U^{(n)})$:
\begin{align*}
I(U^{(n)},\hat U^{(n)}) &\leq H(\IP_{U^{(n)}, \hat U^{(n)}, J} \| \IP_{U^{(n)}}\otimes Q)\\
&= \int \log \frac{d\IP_{U^{(n)},\hat
U^{(n)}, J}}{d\IP_{U^{(n)}}\otimes Q}  d\IP_{U^{(n)},\hat U^{(n)},J}\\
&\leq \int_{\{J=1\}} \log \frac{d\IP_{U^{(n)},\hat
U^{(n)}, J}}{d\IP_{U^{(n)}}\otimes \IP_{\hat U^{(n,1)}}\otimes \delta_1} d\IP_{U^{(n)},\hat
U^{(n)},J} \\
& \ \ \ +\int_{\{J=2\}} \log \frac{d\IP_{U^{(n)},\hat
U^{(n)},J}}{d\IP_{U^{(n)}}\otimes Q^*\otimes \delta_2}  d\IP_{U^{(n)},\hat U^{(n)},J} +\log 2
\end{align*}
Note that the measures $\IP_{U^{(n)},\hat U^{(n)}, J}$ and $\IP_{U^{(n)},\hat U^{(n,1)}, J}$ agree on the set $\{J=1\}$ so that by the construction of $J$ one has $\log \frac{d\IP_{U^{(n)},\hat
U^{(n)}, J}}{d\IP_{U^{(n)}}\otimes \IP_{\hat U^{(n,1)}}\otimes \delta_1}\leq nr$ on $\{J=1\}$. Moreover, one has $\log \frac{d\IP_{U^{(n)},\hat
U^{(n)}, J}}{d\IP_{U^{(n)}}\otimes Q^* \otimes \delta_2}\leq \log |\cC(n)|$ on $\{J=2\}$.
 Consequently, we can continue with
\begin{align*}
I(U^{(n)},\hat U^{(n)}) & \leq P(J=1) \,nr + P(J=2) \log |\cC(n)| +\log 2 \lesssim nr.
\end{align*}

On the other hand, basic transformations and the Cauchy-Schwarz Inequality yield
\begin{align*}
& \IE[\rho_n(U^{(n)},\hat U^{(n)})] \\ &= \IE[\ind_{\{J=1\}} \rho_n(U^{(n)},\hat
U^{(n,1)})] + \IE[\ind_{\{J=2\}} \rho_n(U^{(n)},\hat U^{(n,2)})]\\
&\leq (1-\eta) d \IP(J=1) + \IP(J=2) (1+\eps(n))d +\IP(\cT^c)^{1/2}
\IE[\rho_n(U^{(n)},(u^*,\dots,u^*))^{2}]^{1/2}\\
& \sim [(1-\eta) \IP(J=1)+\IP(J=2)]d.
\end{align*} Therefore, $\lim_{n\to\infty} \IP(J=1)=0$. Consequently, we arrive at
$$
\IE[\rho_n(U^{(n)},\hat U^{(n,1)})^s]^{1/s}\geq \IP(J\not =1)^{1/s} (1-\eta)
d\to (1-\eta) d
$$
and recalling that $\eta\in(0,1)$ was arbitrary finishes the proof.
\end{proof}

\begin{proof}[ of Theorem \ref{th0117-1}, $\text{2}^{\text{nd}}$ statement]
We define $r_0$, $q$ and $n$ as in the proof of the first statement.
By assumption $\nu(\IR)=\infty$ or $\sig\not=0$. Consequently, one has $\lim_{\eps\dto0} F_1(\eps)=\infty$ and $n$ converges to $\infty$ as $\eps$ tends to $0$.

We recall estimate (\ref{eq515-2}):
$$D(C_1 F_1(2\eps),s)\geq D(r,s)\geq \frac{\eps}{4 n} D\Bigl(\frac18 nr_0\Big|\mu_q^{\otimes n},\rho_n,s\Bigr).
$$
Now we conclude with Lemma \ref{le515-2} that
$$
D\Bigl(\frac18 nr_0\Big|\mu_q^{\otimes n},\rho_n,s\Bigr)\gtrsim n D\Bigl(\frac18 r_0\Big|\mu_q,\rho\Bigr).
$$
The assertion follows along the lines of the proof of the first statement.
\end{proof}

\subsection*{Acknowledgement} The research of the first-mentioned author was
supported by the DFG Research Center ``{\sc Matheon} -- Mathematics for key
technologies'' in Berlin.

\hypertarget{vd}{~}\pdfbookmark[1]{References}{vd}
\bibliographystyle{plain}

\begin{thebibliography}{9}

\bibitem{bertoin}
J.\ Bertoin.
\newblock {\em L\'evy processes}, volume 121 of {\em Cambridge Tracts in
  Mathematics}.
\newblock Cambridge University Press, Cambridge, 1996.

\bibitem{creutzig2006}
J.\ Creutzig.
\newblock Approximation of {${\rm S}\alpha S$} {L}\'evy processes in {$L\sb p$}
  norm.
\newblock {\em J. Approx. Theory}, 140(1):71--85, 2006.

\bibitem{Der06b}
S.\ Dereich.
\newblock The coding complexity of diffusion processes under ${L}^p[0,1]$-norm
  distortion.
\newblock Preprint, 2006.

\bibitem{elias1975}
P.\ Elias.
\newblock Universal codeword sets and representations of the integers.
\newblock {\em IEEE Trans.\ Inform.\ Theory}, 21:194--203, 1975.

\bibitem{Iha93}
S.\ Ihara.
\newblock {\em {Information theory for continuous systems}}.
\newblock Singapore: World Scientific, 1993.

\bibitem{Kol68}
A.\ N.\ Kolmogorov.
\newblock Three approaches to the quantitative definition of information.
\newblock {\em Internat. J. Comput. Math.}, 2:157--168, 1968.

\bibitem{LP}
H.\ Luschgy and G.\ Pag\`{e}s.
\newblock Functional quantization rate and mean pathwise regularity of
  processes with an application to {L}\'{e}vy processes.
\newblock 2007.
\newblock Preprint.

\bibitem{protter}
P.\ E.\ Protter.
\newblock {\em Stochastic integration and differential equations}, volume~21 of
  {\em Applications of Mathematics (New York)}.
\newblock Springer-Verlag, Berlin, second edition, 2004.

\bibitem{Vor07}
C.\ Vormoor.
\newblock {\em High resolution coding of point processes and the Boolean
  model}.
\newblock PhD thesis, Technische Universit\"{a}t Berlin, 2007.

\end{thebibliography}

\end{document}